\newtheorem{theorem}{Theorem}[section]
\newtheorem{lemma}[theorem]{Lemma}
\newtheorem{corollary}[theorem]{Corollary}
\newtheorem{proposition}[theorem]{Proposition}
\theoremstyle{definition}
\newtheorem{definition}[theorem]{Definition}
\newtheorem{example}[theorem]{Example}
\theoremstyle{remark}
\numberwithin{equation}{section}
\newcommand{\mmod}[1]{\,\,(\text{mod}\,\,#1)}
\def\bfm{{\mathbf m}}
 \def\bfP{{\mathbf P}}
\def\bfv{{\mathbf v}}
\def\bfw{{\mathbf w}}
\def\calB{{\mathcal B}}
\def\calD{{\mathcal D}}
\def\calI{{\mathcal I}}
\def\calM{{\mathcal M}}
\def\calP{{\mathcal P}}
\def\calQ{{\mathcal Q}}
\def\calR{{\mathcal R}} 
\def\calT{{\mathcal T}}
\def\calU{{\mathcal U}}
\def\dbC{{\mathbb C}}\def\dbE{{\mathbb E}}
\def\dbN{{\mathbb N}}\def\dbP{{\mathbb P}}
\def\dbR{{\mathbb R}}\def\dbT{{\mathbb T}}
\def\dbZ{{\mathbb Z}}
\def\gam{{\gamma}} \def\Gam{{\Gamma}}
\def\del{{\delta}}
 \def\Lam{{\Lambda}} \def\Lamtil{{\widetilde \Lambda}}
\def\sig{{\sigma}}
\def\ome{{\omega}} \def\bfome{{\boldsymbol \omega}} 
\def\eps{\varepsilon}
\def\le{\leqslant} \def\ge{\geqslant}
\newcommand{\avE}{\mathop{\vcenter{\hbox{\relsize{+1}$\dbE$}}}}
\def\pt{{\rm pt}} \def\rmX{{\mathrm X}} \def\rmY{{\mathrm Y}}
\def\abar{\overline a} \def\fbar{\overline f} \def\gbar{\overline g}
\def\onetil{{\widetilde 1}} 
\begin{document}
\title[Multiple recurrence]{Multiple recurrence and convergence\\ along the 
primes}
\author[T. D. Wooley]{Trevor D. Wooley}
\address{TDW: School of Mathematics, 
University of Bristol, 
University Walk, Clifton, 
Bristol BS8 1TW, United Kingdom}
\email{matdw@bristol.ac.uk}
\author[T. D. Ziegler]{Tamar D. Ziegler}
\address{TDZ: Department of Mathematics, Technion-Israel Institute of 
Technology, Haifa 32000, Israel}
\email{tamarzr@tx.technion.ac.il}
\thanks{The first author is supported by a Royal Society Wolfson Research Merit 
Award, and the second author by ISF grant 557/08, an Alon Fellowship, and a 
Landau Fellowship.}
\subjclass[2010]{11B30, 11A41, 28D05, 37A05}
\keywords{Return set, prime number, Gowers norm, nilsequences}
\date{}
\begin{abstract} Let $E\subset \dbZ$ be a set of positive upper density. Suppose
 that $P_1,P_2,\ldots ,P_k\in \dbZ[X]$ are polynomials having zero constant 
terms. We show that the set $E\cap (E-P_1(p-1))\cap \ldots \cap (E-P_k(p-1))$ is
 non-empty for some prime number $p$. Furthermore, we prove convergence in $L^2$
 of polynomial multiple averages along the primes. 
\end{abstract}
\maketitle

\section{Introduction} Given a subset $E$ of the integers having positive upper 
density, the set $E-E$ of differences between pairs of elements of $E$ contains 
an element of the shape $p-1$, with $p$ a prime number. This conjecture of 
Erd\H os was proved by means of the Hardy-Littlewood (circle) method by 
S\'ark\"ozy \cite{Sar1978} in a quantitative form which shows that, if $E-E$ 
contains no shifted prime $p-1$, then necessarily
\begin{equation}\label{1.1}
x^{-1}\text{card}(E\cap [1,x])\ll \frac{(\log \log \log x)^3
(\log \log \log \log x)}{(\log \log x)^2}.
\end{equation}
Subsequent improvements, first by Lucier \cite{Luc2008}, and most recently by 
Ruzsa and Sanders \cite{RS2008}, show that the function on the right hand side 
in the conclusion (\ref{1.1}) may be replaced by $\exp (-c(\log x)^{1/4})$, for 
some positive absolute constant $c$. Problems in which one asks for specified 
constellations of differences between successive terms from a sequence of 
elements in $E$, each difference depending on the same shifted prime, have been 
addressed only very recently. Thus, for example, the problem of exhibiting 
non-trivial three term arithmetic progressions from $E$, with common difference 
a shifted prime, was successfully analysed by Frantzikinakis, Host and Kra 
\cite{FHK2007}, with the analogous problem for longer arithmetic progressions 
conditional on the Inverse Conjecture for Gowers Norms formulated by Green and 
Tao \cite{GT2009}. Our goal in this paper is the unconditional resolution of a 
generalisation of these earlier results, an analogue of the Bergelson-Leibman 
theorem \cite{BL1996}, which exhibits a constellation of differences defined by 
given polynomials whenever these polynomials have zero constant terms.\par

In order to describe our conclusions, we must introduce some notation, and this 
we use throughout. We denote by $[N]$ the discrete interval $\{1,\ldots ,N\}$ of
 natural numbers. Also, we write $|X|$ for the cardinality of a finite set $X$, 
and when $X$ is non-empty, we write
$$\avE_{n\in X}F(n)=\frac{1}{|X|}\sum_{n\in X}F(n).$$
Given a set of integers $E$ having positive upper density, and polynomials 
$P_1,\ldots ,P_k\in \dbZ[x]$, we define the return set $R_{P_1,\ldots ,P_k}$ by
$$R_{P_1,\ldots, P_k}=\{ n\in \dbZ:E\cap (E-P_1(n))\cap \ldots \cap (E-P_k(n))\ne 
\emptyset \}.$$
Finally, we write $\dbP$ for the set of prime numbers. It is natural to conjecture 
that return sets defined by polynomials with zero constant terms contain shifted 
primes (see, for example, Conjecture 1.1 of \cite{LP2009}). Our first result 
confirms this conjecture in full generality for the sets $\dbP\pm 1$ of shifted 
primes.

\begin{theorem}\label{theorem1.1}
Let $E$ be a set of integers having positive upper density, and let 
$P_1,\ldots,P_k\in \dbZ[x]$ satisfy the condition that $P_i(0)=0$ $(1\le i\le 
k)$. Then $R_{P_1,\ldots ,P_k}\cap (\dbP+1)\ne \emptyset$ and $R_{P_1,\ldots ,P_k}\cap 
(\dbP-1)\ne \emptyset$.
\end{theorem}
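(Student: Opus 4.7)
\emph{Plan of proof.} The plan is to translate the combinatorial statement into an ergodic average along the primes and then compare it to the corresponding average along the integers using higher-order Fourier analysis. First I would invoke Furstenberg's correspondence principle to reduce Theorem~\ref{theorem1.1} to the assertion: for every invertible measure-preserving system $(X,\calB,\mu,T)$ and $A\in\calB$ with $\mu(A)>0$,
$$\liminf_{N\to\infty}\avE_{n\in[N]}\Lambda(n+1)\,\mu\bigl(A\cap T^{-P_1(n)}A\cap\cdots\cap T^{-P_k(n)}A\bigr)>0,$$
where $\Lambda$ is the von Mangoldt function. Positivity of this limit inferior guarantees a prime $p=n+1$ producing a non-empty intersection, and the $\dbP-1$ case is identical upon replacing $\Lambda(n+1)$ by $\Lambda(n-1)$. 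I would then apply the standard W-trick to neutralise the arithmetic bias of the primes: fix a slowly growing $w=w(N)$, set $W=\prod_{p\le w}p$, and restrict to primes $p\equiv 1\pmod{W}$, so that $n=Wm$ and the polynomials $Q_i(m):=P_i(Wm)$ still have zero constant term. Replacing $\Lambda$ by the sifted weight $\Lambda_{1,W}(m)=\tfrac{\phi(W)}{W}\Lambda(Wm+1)$, of mean $1+o(1)$ by Siegel--Walfisz, the Bergelson--Leibman theorem \cite{BL1996} applied to the $Q_i$ makes the unweighted limit inferior strictly positive, so the task reduces to proving that
$$\avE_{m\in[N]}\bigl(\Lambda_{1,W}(m)-1\bigr)\,\mu\bigl(A\cap T^{-Q_1(m)}A\cap\cdots\cap T^{-Q_k(m)}A\bigr)=o(1).$$

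Establishing this last $o(1)$ bound will be the principal obstacle. My approach is to combine three ingredients. First, the structure theorem of Host--Kra, Leibman and Ziegler identifies the characteristic factor of polynomial multiple averages with an inverse limit of nilsystems, so that $m\mapsto\mu(A\cap T^{-Q_1(m)}A\cap\cdots\cap T^{-Q_k(m)}A)$ is well-approximated (in mean) by a polynomial nilsequence $m\mapsto F(g(m)\Gamma)$ on some nilmanifold $G/\Gamma$. Second, a weighted generalised von Neumann theorem, obtained by PET-style induction on the complexity of the polynomial family, dominates the above correlation by $\|\Lambda_{1,W}-1\|_{U^{s+1}[N]}$ for some $s$ depending only on $k$ and $\max_i\deg P_i$. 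Third, the Green--Tao--Ziegler theorem guarantees that this Gowers norm tends to zero as $N\to\infty$ (for suitably slowly growing $w=w(N)$), which completes the comparison.

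The subtle technical point lies in the first two ingredients: the Host--Kra--Ziegler characteristic factor is a priori only a qualitative object, yet one requires a \emph{quantitative}, uniform-in-$N$ pointwise approximation by a polynomial nilsequence of bounded complexity, together with a generalised von Neumann theorem strong enough to convert Gowers uniformity of the weight into smallness of the weighted polynomial correlation. This will require melding PET induction with Leibman's equidistribution theorem for polynomial orbits on nilmanifolds and with the inverse theorem for the Gowers $U^{s+1}$-norms. Once that transference is executed cleanly, the Gowers uniformity of $\Lambda_{1,W}-1$ annihilates the correlation, the Bergelson--Leibman theorem supplies the positive contribution from the constant part of the weight, and Theorem~\ref{theorem1.1} follows.
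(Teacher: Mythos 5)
Your overall skeleton --- Furstenberg correspondence, the $W$-trick with $b=\pm1$, a uniform Bergelson--Leibman theorem supplying the positive main term, and a comparison of the weighted average with the unweighted one --- matches the paper. But the central analytic step, ``a weighted generalised von Neumann theorem \ldots dominates the correlation by $\|\Lambda_{1,W}-1\|_{U^{s+1}[N]}$, and the Green--Tao--Ziegler theorem guarantees that this Gowers norm tends to zero,'' has a genuine gap, and it is precisely the gap this paper is written to circumvent. First, PET induction for a \emph{polynomial} system does not produce the global norm $U^{s+1}[N]$: the Cauchy--Schwarz iterations force the auxiliary shifts to live at scale $\sqrt{N}$ (more generally $N^{o(1)}$ relative to the sieve level), so what comes out is a \emph{local} Gowers-type norm (the $V_{\calP}$-norms of Definition 3.2). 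Smallness of $\Lambda_{1,W}-1$ in such a local norm is not a known theorem --- it would amount to Gowers uniformity of the primes in short intervals. Second, even if one could arrange for the global norm to appear, the assertion $\|\Lambda_{1,W}-1\|_{U^{s+1}[N]}=o(1)$ for general $s$ is the Green--Tao ``$\Lambda$ is Gowers uniform'' statement, whose proof requires the Inverse Conjecture for the $U^{s+1}$-norms; at the time this paper was written that was available only for $s\le 3$, and the whole point of the argument here is to be unconditional. Third, your Cauchy--Schwarz steps are carried out against the unbounded weight $\Lambda_{1,W}$; to close them one must first dominate the weight pointwise by a pseudorandom majorant, which your proposal never introduces.

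The paper resolves this by splitting the two roles you have merged. The PET/von Neumann step is run not against $\Lamtil_{w,b}-1$ but against $\nu_{w,b}-1$, where $\nu_{w,b}$ is the enveloping-sieve majorant of Theorem 3.3: the bound $\|\nu_{w,b}-1\|_{V_{\calP}}=o_{w\to\infty}(1)$ is a purely sieve-theoretic (linear-forms) computation requiring no inverse theorem, and it reduces everything to the characteristic nil-factor (Proposition 3.10 and Theorem 3.9). Only \emph{after} this reduction, when the correlation sequence is an honest polynomial nilsequence on a fixed nilmanifold, is the actual von Mangoldt weight compared with the constant $\eta$, via the decomposition $\Lam=\Lam^\sharp+\Lam^\flat$ and the Green--Tao theorem that the M\"obius function is orthogonal to polynomial nilsequences --- an unconditional input. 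If you want to salvage your plan, you must either insert the majorant and the local norms into your von Neumann step and then handle the primes only on the nil-factor as above, or else accept that your route proves the theorem only modulo the Gowers uniformity of the $W$-tricked primes.
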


We are also able to establish that polynomial averages converge when restricted 
to the prime numbers.

\begin{theorem}\label{theorem1.2}
Suppose that $X=(X_0,\calB,\mu,T)$ is an invertible measure preserving system. Let 
$f_1,\ldots ,f_k\in L^\infty(X)$, and let $P_1,\ldots ,P_k\in \dbZ[x]$. Then as 
$N\rightarrow \infty$, the averages
$$\avE_{p\in \dbP \cap [N]}\prod_{j=1}^kf_j(T^{P_j(p)}x)$$
converge in $L^2(X)$.
\end{theorem}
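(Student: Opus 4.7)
The plan is to reduce the convergence along primes to convergence of a von Mangoldt-weighted average, and then to exploit a Green-Tao-Ziegler decomposition of $\Lam$ into a structured nilsequence part and a Gowers-uniform part. By the prime number theorem together with partial summation, the $L^2$-convergence of
$$A_N(x):=\avE_{n\in[N]}\Lam(n)\prod_{j=1}^k f_j(T^{P_j(n)}x)$$
is equivalent to the asserted convergence along primes, since prime powers with exponent at least $2$ contribute $O(\sqrt N\log N)$ and are negligible on averaging.

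Next I would invoke a Green-Tao-Ziegler decomposition $\Lam = \Lam^\sharp+\Lam^\flat$, in which $\Lam^\sharp$ is (up to small $L^1$ error) a bounded polynomial nilsequence of bounded complexity, while $\Lam^\flat$ is $1$-bounded and satisfies $\|\Lam^\flat\|_{U^s[N]}=o_{N\to\infty}(1)$ for every fixed $s\ge 2$. Substituting this decomposition into $A_N$ splits the problem into a ``structured'' average against $\Lam^\sharp$ and a ``uniform'' average against $\Lam^\flat$.

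For the structured piece one needs convergence in $L^2(X)$ of the nilsequence-weighted averages
$$\avE_{n\in[N]}\phi(n)\prod_{j=1}^k f_j(T^{P_j(n)}x)$$
whenever $\phi$ is a polynomial nilsequence of bounded complexity. I would obtain this by realising $\phi$ as $F(g(n)\Gam)$ on a nilmanifold $G/\Gam$, adjoining the nilrotation $n\mapsto g(n)\Gam$ as a direct factor of the system $X$, and then applying the unweighted Host-Kra/Leibman polynomial convergence theorem on the enlarged system. Being essentially an extension of an existing convergence theory, this step should be routine.

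The main obstacle lies in the uniform piece, where one must prove a Gowers-norm control estimate of the shape
$$\Bigl\|\avE_{n\in[N]}a(n)\prod_{j=1}^k f_j(T^{P_j(n)}x)\Bigr\|_{L^2(X)}\ll \|a\|_{U^s[N]}+o_{N\to\infty}(1),$$
valid for some $s=s(P_1,\ldots,P_k)$ and uniformly over $1$-bounded weights $a\colon[N]\to\dbC$ and $f_j\in L^\infty(X)$ with $\|f_j\|_\infty\le 1$. The proof of such an estimate requires a PET-style induction in the spirit of Bergelson-Leibman, with repeated Cauchy-Schwarz steps successively lowering the degree and reducing the number of polynomials in play, until the average collapses to a single sum bounded by a uniformity norm of $a$; careful bookkeeping is needed to identify the correct value of $s$ and to control the errors introduced at each Cauchy-Schwarz stage. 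Once this control estimate is in hand, combining it with $\|\Lam^\flat\|_{U^s[N]}\to 0$ shows that the uniform contribution vanishes in $L^2(X)$ as $N\to\infty$, completing the proof.
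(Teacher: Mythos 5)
Your opening reduction (primes to a von Mangoldt--weighted average) matches the paper, and your instinct that the heart of the matter is a PET-style generalized von Neumann estimate is also right. But there are two genuine gaps in the way you set up the decomposition of $\Lam$. First, you have omitted the $W$-trick. There is no decomposition $\Lam=\Lam^\sharp+\Lam^\flat$ on $[N]$ with $\|\Lam^\flat\|_{U^s[N]}=o(1)$: the function $\Lam-1$ has Gowers norms bounded away from zero for every $s\ge 2$ because of its bias modulo small primes (it is supported on integers coprime to $2,3,5,\ldots$). One must first split $[WN]$ into residue classes $Wn+b$ with $(b,W)=1$, $W=\prod_{p<w}p$, and work with $\frac{\phi(W)}{W}\Lam(Wn+b)-1$; this is exactly why the paper's argument runs through the weights $\Lamtil_{w,b}$ and sums over $b$ at the end. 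Second, and more seriously, your uniform piece $\Lam^\flat$ cannot be taken $1$-bounded: any reasonable splitting leaves a uniform part that is only bounded by a pseudorandom majorant of mean $1+o(1)$ and pointwise size up to $\log N$. Consequently the estimate
$$\Bigl\|\avE_{n\in[N]}a(n)\prod_{j=1}^k f_j(T^{P_j(n)}x)\Bigr\|_{L^2(X)}\ll \|a\|_{U^s[N]}+o(1)$$
for $1$-bounded $a$ is not the inequality you need; you need its analogue for weights dominated by an enveloping-sieve majorant $\nu_{w,b}$, proved \emph{relative} to that majorant. This is the technical core of the paper (Proposition 3.10): the PET/Cauchy--Schwarz induction is carried out with the weight $\nu_{w,b}$ present, and the relevant norms are not the global $U^s[N]$ norms but local Gowers-type norms $V_k$ with shift parameters of size $\sqrt{N}$ --- short relative to $N$ but longer than the sieve level $R=N^\eta$ --- precisely so that the linear-forms and correlation conditions of the majorant apply at each Cauchy--Schwarz stage. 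Treating this as the standard $1$-bounded PET bookkeeping understates where the difficulty lies.

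A smaller but related inaccuracy: your structured part $\Lam^\sharp$ is not, up to small error, a bounded nilsequence; it is a truncated divisor sum, and what one actually proves is that $\frac{\phi(W)}{W}\Lam^\sharp(Wn+b)-1$ is small in $U^r[N]$, while the large-divisor part $\Lam^\flat$ is handled by the Green--Tao theorem that the M\"obius function is orthogonal to polynomial nilsequences. Note also that the paper's architecture is dual to yours: rather than decomposing the weight $\Lam$ into structured plus uniform at the outset, it decomposes the functions $f_j$ against a characteristic pro-nilfactor supplied by Leibman's structure theorem, disposes of the orthogonal components by the relative von Neumann estimate, and only then, on an honest nilsystem where the ergodic average is itself a polynomial nilsequence in $n$, compares $\Lamtil_{w,b}$ with its mean. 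Your route could in principle be repaired along these lines, but as written the $1$-boundedness claim and the absence of the $W$-trick are fatal to the stated plan.
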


The simplest case of Theorem \ref{theorem1.1} is that in which $k=1$ and 
$P_1(n)=n$. As we have already noted in our opening paragraph, this is the 
case that was successfully considered by S\'ark\"ozy \cite{Sar1978} via the 
circle method. The convergence of the averages asserted by Theorem 
\ref{theorem1.2} in this case was apparently first demonstrated by Weirdl 
\cite{Wie1989}, and pointwise convergence has also been established (see 
\cite{Bou1988}, \cite{Wie1988}). In the special case $k=2$ and 
$(P_1(n),P_2(n))=(n,2n)$, the conclusions of Theorems \ref{theorem1.1} and 
\ref{theorem1.2} have been proved unconditionally by Frantzikinakis, Host and 
Kra \cite{FHK2007}, and subject to the truth of the Inverse Conjecture for 
Gowers Norms described in \cite{GT2009}, this work extends also to any positive 
integer $k$ and linear polynomials $P_i(n)=in$ $(1\le i\le k)$\footnote{The 
Inverse Conjecture for the Gowers norm in the case $k=4$ is by now known 
\cite{GTZ2009}, and thus the proof in \cite{FHK2007} extends unconditionally to 
$4$-term arithmetic progressions.}. We note, however, that the full conclusions 
of Theorems \ref{theorem1.1} and \ref{theorem1.2} do not follow from the 
approach in \cite{FHK2007}, even if one is prepared to assume the latter Inverse 
Conjecture. We remark also that Li and Pan \cite{LP2009} have very recently 
established the case $k=1$ of Theorem \ref{theorem1.1} when the set of shifted 
primes is $\dbP-1$ (see Corollary 1.1 of \cite{LP2009}).\par

Rather than attempt to wield control of the prime variable conjecturally made 
available through Gowers norms, we instead seek control of convergence through a
 variable from the set $E$, switching the roles of this variable and the prime. 
Such a strategy, in which for less well controlled aspects of an analysis one 
may crudely count prime variables by inclusion in larger well-behaved subsets of
 the integers, is reasonably familiar to practitioners of the circle method and 
sieve theory. The mechanism which makes this switching of roles effective is the
 use of the local Gowers norms introduced in \cite{TZ2008}. This allows us to 
assume that the set $E$ possesses extra structure, namely a {\it nilstructure}. 
With this information in hand, we are able to apply the recent work of Green 
and Tao \cite{GT2010}, showing that the M\"obius function is orthogonal to 
(polynomial) nilsequences, in combination with the Leibman structure theorem for
 multivariable polynomial averages \cite{Lei2005a} in order to deliver the 
conclusions of Theorems \ref{theorem1.1} and \ref{theorem1.2}.\par

It seems likely that our methods could be adapted to handle modifications of 
Theorems \ref{theorem1.1} and \ref{theorem1.2} in which the polynomials in 
$\dbZ[x]$ are replaced by general integer-valued polynomials. Indeed, even the 
restriction to polynomials having vanishing constant terms might be weakened 
through a modification of the hypotheses of Theorem \ref{theorem1.1} to 
accommodate jointly intersective polynomials (see \cite{BLL2008} for the 
relevant ideas).\par

We have recorded a number of notational and technical preliminaries relating to 
the ergodic theory that we employ in two appendices at the end of this paper. 
Readers not already aficionados of the subject area would be well-advised to 
peruse this material before continuing further. In particular, we take this 
opportunity to emphasise that throughout this paper, whenever we refer to a 
measure preserving system, we implicitly assume this system to be {\it 
invertible}. In section 2 we outline our approach to the central problem of the
 paper. We consider the prime return set $R_{P_1,\ldots ,P_k}$ in section 3, 
providing the details of the proof of Theorem \ref{theorem1.1}. Section 4 is 
devoted to the convergence of polynomial averages restricted to the primes, 
leading to the proof of Theorem \ref{theorem1.2}.\par

The authors are grateful to the referees of this paper for their careful reading,
 detailed comments, and the consequent improvement in our exposition.

\section{Outline of proof}
We begin by considering a $k$-tuple of polynomials $\bfP=(P_1,\ldots ,P_k)$, and
 a set $E$ having upper density exceeding some positive number $\del$. Our first
 step is to translate the question on the prime return set into an ergodic 
theoretic one via Furstenberg's correspondence principle. Thus we replace the 
set $E$ by a measurable set $A$, of measure $\mu(A)>\del$, in a probability 
measure preserving system $\rmX =(X,\calB,\mu,T)$. By a uniform version of the 
Bergelson-Leibman theorem (see Theorem \ref{theorem3.8} below), there is a 
positive number $c(\del)$ with the property that for any natural number $W$, one
 has
$$\lim_{N\rightarrow \infty}\avE_{n\le N}\mu(T^{-P_1(Wn)}A\cap \ldots \cap 
T^{-P_k(Wn)}A)>c(\del).$$
Here, we emphasise that the number $c(\del)$ depends on $\del$, as well as the 
polynomials $P_1,\dots ,P_k$, but is independent of $A$ and $W$.\par

The ordered polynomial system $\calP=\{P_1,\ldots ,P_k\}$ determines, via PET 
induction, the number of steps $l(\calP)$ that one must take, by repeated 
application of the Cauchy-Schwarz inequality, to obtain a parallelepiped system 
of polynomials independent of the parameter $n$. This in turn determines the 
sieve level $R=N^\eta$, also independent of $W$, via the condition 
$\eta<2^{-3-l(\calP)}$. All estimates henceforth depend implicitly on $\del$ 
and $\eta$.\par

In our next step, we take $w$ to be a slowly growing function of $N$, and put
$$W=\prod_{\substack{p<w\\ p\in \dbP}}p.$$
We will sometimes need to fix $w$ (very large) and take $N$ much larger, and 
for this reason it is useful to adopt the following convention concerning 
Landau's $o$-notation within this paper. As usual, when a quantity approaches 
zero as the main parameter $N$ approaches infinity, we shall say that this 
quantity is $o(1)$. We denote by $o_{w\rightarrow \infty}(1)$ any quantity that 
approaches zero as $w \rightarrow \infty$. Finally, we denote by $o_w(1)$ any 
quantity that, with $w$ fixed, approaches zero as $N \rightarrow \infty$.\par

Next, let $b$ be an integer with $(b,W)=1$. Perhaps it is worth noting that, 
when it comes to establishing Theorem \ref{theorem1.2} in section 4, we must 
consider all possible values of $b$. However, for the proof of Theorem 
\ref{theorem1.1} in section 3, it transpires that the only values of $b$ of 
interest are $\pm 1$ (see the discussion surrounding (\ref{3.5a}) below). We 
define the function $\Lamtil_{w,b}(n)$ by putting
$$\Lamtil_{w,b}(n)=\frac{\phi(W)}{W}\log R=\eta \frac{\phi(W)}{W}\log N,$$
when $Wn+b$ is a prime number\footnote{In the detailed account of our argument 
in section 3, we make the additional technical restriction that 
$\Lamtil_{w,b}(n)$ is thus defined only when $n\in [\frac{1}{2}N]$. The 
straightforward complications associated with this constraint are best ignored 
in the present outline.}, and otherwise by putting $\Lamtil_{w,b}(n)=0$. Here, 
as usual, we write $\phi(W)$ for the Euler totient, so that $\phi(W)=
\prod_{p<w}(p-1)$. In \cite{TZ2008}, an enveloping sieve argument is applied 
to show that there exists a function $\nu_{w,b}(n)$ with the property that 
$\Lamtil_{w,b}(n)\le \nu_{w,b}(n)$, so that $\Lamtil_{w,b}$ is pointwise 
bounded by $\nu_{w,b}$, and
$$\| \nu_{w,b}-1\|_{V_{\calP}}=o_{w\rightarrow \infty}(1).$$
Although we defer until later the definition of the norm here, it may be helpful 
to note that it is similar to a Gowers norm, though with shift sizes short with 
respect to $N$, but larger than the sieve level $R$. We remark that our use of 
notation differs from that in \cite{TZ2008}, owing to the simpler nature of the 
polynomials in question, as well as the absence of scaling issues which obviates 
the need for the full structure theorem proved in \cite{TZ2008}.\par

We examine the average
\begin{equation}\label{2.1}
\avE_{n\le N}\Lamtil_{w,b}(n)\mu (T^{-P_1(Wn)}A\cap \ldots \cap T^{-P_k(Wn)}A),
\end{equation}
and make use of the majorant $\nu_{w,b}$ of $\Lamtil_{w,b}$ to compare it to the 
related average
\begin{equation}\label{2.2}
\avE_{n\le N}\eta \mu (T^{-P_1(Wn)}A\cap \ldots \cap T^{-P_k(Wn)}A),
\end{equation}
which we already know to exceed $\eta c(\del)$. Our aim is to show that the 
difference between these averages is $o_{w\rightarrow \infty}(1)$. This we achieve 
in two steps. The parameter $l(\calP)$ determines a factor $Z_{l(\calP)}(\rmX)$ 
having the structure of an $(l(\calP)-1)$-step nilsystem, this system being 
independent of $W$. In the first step, we show that when $f_i$ is orthogonal to 
$Z_{l(\calP)}(\rmX)$ for some index $i$ with $1\le i\le k$, or equivalently, when 
$\pi:\rmX \rightarrow Z_{l(\calP)}(\rmX)$ is the factor map and $\pi_*f_i=0$, then
\[
\avE_{n\le N}\Lamtil_{w,b}(n)\int \prod_{j=1}^kT^{P_j(Wn)}f_j(x)\,d\mu =o_w(1)+
o_{w\rightarrow \infty}(1).
\]
As usual, here and throughout, we write $Tf(x)$ for $f(Tx)$. We then decompose 
the characteristic function on $A$ by means of the trivial relation 
$1_A=\pi^*\pi_*1_A+(1_A-\pi^*\pi_*1_A)$. Then $\pi_*(1_A-\pi^*\pi_*1_A)=0$, and 
thus
\begin{align*}
\avE_{n\le N}&\Lamtil_{w,b}(n)\mu (T^{-P_1(Wn)}A\cap \ldots \cap T^{-P_k(Wn)}A)\\
&=\avE_{n\le N}\Lamtil_{w,b}(n)\int \prod_{j=1}^k(T^{P_j(Wn)}\pi_*1_A(x)) \ d\pi_*
\mu +o_w(1)+o_{w\rightarrow \infty}(1).
\end{align*}
This allows us to reduce to the situation in which the system $\rmX$ is an 
$l(\calP)$-step pro-nilsystem. In fact, technically speaking, we replace $A$ by 
the non-negative function $\pi_*1_A$, which has integral against $\pi_*\mu$ 
exceeding $\del$. We note that the universality of the constant $c(\del)$ 
applies for any such function.\par

We make an additional reduction to the case in which $f$ is defined on a 
nilsystem $(G/\Gam, \calB,\mu, T)$. This is achieved by means of an 
approximation in $L^2$, and is independent of $w$. If this system is 
disconnected, then it can be decomposed into a union of some finite number, $J$,
 of components $\{ \rmX_i\}_{i=1}^J$ having the property that $T^J:\rmX_i
\rightarrow \rmX_i$ is totally ergodic for $1\le i\le J$. 

We now follow the argument of \cite{GT2009}. We replace $\Lamtil_{w,b}(n)$ by the
 function
$$\Lam_{w,b}(n)=\eta \frac{\phi(W)}{W}\Lam(Wn+b),$$
in which $\Lam$ denotes the classical von Mangoldt function. We then decompose 
$\Lam$ by means of a M\"obius identity into the shape $\Lam^\sharp+\Lam^\flat$, 
corresponding to an associated smooth decomposition of the identity function 
$\chi(x)=x$ in the shape $\chi=\chi^\sharp+\chi^\flat$, with $\Lam^\sharp$ 
associated to small divisors and $\Lam^\flat$ associated to large divisors, just 
as in \cite{GT2009}. Observe next that for any Lipschitz function 
$f$, the expression
$$\prod_{j=1}^k(T^{P_j(Wn)}f(x))$$
is a polynomial nilsequence on $(G/\Gam)^k$. As in \cite{GT2009}, we show that 
the contribution arising from the term
$$\avE_{n\le N}\Bigl( \frac{\phi(W)}{W}\Lam^\sharp(Wn+b)-1\Bigr) \prod_{j=1}^k
(T^{P_j(Wn)}f(x))$$
is negligible. The estimate of the contribution arising from the term 
corresponding to $\Lam^\flat$ follows from Theorem 1.1 of \cite{GT2010}, which 
asserts that the M\"obius function is orthogonal to polynomial nilsequences
with bounds that depend only on the degree of the polynomial and not on the 
polynomial itself.\par

Our goal of showing that the averages (\ref{2.1}) and (\ref{2.2}) are 
asymptotically equal is completed by combining the results of the last 
paragraph, and this completes our outline of the proof.

\section{Prime return sets}
Our objective in this section is the proof of Theorem \ref{theorem1.1}. We begin
 with a discussion of the pseudorandom measures employed in the sketch of the 
argument provided in the previous section.

\subsection{Pseudorandom measures} We first define a normalised counting 
function for prime numbers, with a smoothing weight designed to flatten 
distribution across a subset of residue classes. Let $\eta$ be a positive number 
with $\eta<2^{-3-l(\calP)}$, and put $R=N^\eta$. Define the function $\onetil 
:[N]\rightarrow \dbR^+$ by putting $\onetil (x)=1$ when $x\in [\frac{1}{2}N]$, 
 and otherwise by taking $\onetil (x)=0$. In addition, define $\Lamtil_{w,b}:[N]
 \rightarrow \dbR^+$ by setting
\begin{equation}\label{3.1}
\Lamtil_{w,b}(x)=\frac{\phi(W)}{W}\log R,
\end{equation}
when $x\in [{\textstyle{\frac{1}{2}}}N]$ and $Wx+b\in \dbP$, and otherwise by 
taking $\Lamtil_{w,b}(x)=0$. Here, we choose to identify $[\frac{1}{2}N]$ with 
a subset of $\dbZ/N\dbZ$, in the usual manner. We remark that the function 
$\Lamtil_{w,b}(x)$ is a modification of the classical von Mangoldt function 
$\Lam(x)$. The use of $\log R$ in place of $\log N$, as a normalising factor, is
 necessary in order to bound $\Lamtil$ pointwise by the pseudorandom measure 
$\nu$ shortly to be defined. The ratio $\eta$ between $\log R$ and $\log N$ 
reflects the relative density between the primes, and the almost primes 
occurring implicitly within our argument.\par

An application of the Prime Number Theorem in arithmetic progressions with error 
term (see, for example, Corollary 11.21 of \cite{MV2007}) reveals that when $b$ 
and $W$ are coprime, one has
$$|\{ x\in [{\textstyle{\frac{1}{2}}}N]:Wx+b\in \dbP\}|\gg \frac{W}{\phi(W)}
\frac{N}{\log N}.$$
It follows that $\Lamtil_{w,b}$ has relatively large mean, namely
$$\avE_{n\in [N]}\Lamtil_{w,b}\gg \eta .$$

\par Before announcing the key properties of the pseudorandom measure employed 
in our argument, we must record some definitions. The first definition, of a 
measure, comes from Definition 6.1 of \cite{GT2009}.

\begin{definition}\label{definition3.1} A {\it measure} is a non-negative 
function $\nu_w:[N]\rightarrow \dbR^+$ with the total mass estimate
\begin{equation}\label{3.3}
\avE_{n\in [N]}\nu_w=1+o_{w\rightarrow \infty}(1),
\end{equation}
and such that for each positive number $\eps$, one has the crude pointwise bound
 $\nu_w=O_\eps(N^\eps)$.
\end{definition}

Next we define polynomial norms analogous to Gowers norms.

\begin{definition}\label{definition3.2} Let $a$ be a function from $\dbZ$ into 
$\dbC$ supported in $[N]$. When $k$ is a non-negative integer, we define the 
{\it $V_k$-norm} of $a$ to be the quantity $\|a\|_{V_k}$ defined via the 
relation
$$\|a\|_{V_k}^{2^k}=\avE_{n\le N}\avE_{\substack{m_1,\ldots ,m_k\le \sqrt{N}\\ 
m'_1,\ldots ,m'_k\le \sqrt{N}}}\,\prod_{\bfome\in\{0,1\}^k}a^\bfome(n+\bfome\cdot 
\bfm+({\mathbf 1}-\bfome)\cdot \bfm').$$
Here, we write ${\mathbf 1}$ for the vector $(1,1,\ldots ,1)$, and we put 
$a^\bfome=a$ when $\sum_{i=1}^k\ome_i\equiv 0\pmod{2}$, and otherwise we put 
$a^\bfome=\abar$. Also, when $\calP=\{P_1,\ldots ,P_k\}$ is a standard 
polynomial system with parallelepiped order $l(\calP)$, we define the 
$V_\calP$-norm of the function $a$ by $\|a\|_{V_\calP}=\|a\|_{V_{l(\calP)+1}}$.
\end{definition}

Observe that
$$\|a\|_{V_1}^2=\avE_{n\le N}\Bigl|\avE_{m\le \sqrt{N}}a(n+m)\Bigl|^2\ge 0,$$
so that the definition of the $V_k$-norm makes sense when $k=1$. For larger 
values of $k$, such follows from the following lemma, which records two simple 
properties of the $V_k$-norm useful in our subsequent deliberations.

\begin{lemma}\label{lemma3.3} Let $a$ be a function from $\dbZ$ into $\dbC$ 
supported in $[N]$. When $k$ is a non-negative integer and $0<\gam\le 2^k$, 
one has
$$\avE_{m,m'\le \sqrt{N}}\| a(n+m)\abar(n+m')\|_{V_k}^\gam \le 
\|a\|_{V_{k+1}}^{2\gam},$$
with equality when $\gam=2^k$. If, moreover, the function $a$ has the 
property that for each positive number $\eps$, one has the pointwise bound 
$a(n)=O_\eps(N^\eps)$, then
$$\Bigl|\avE_{n\le N}a(n)\Bigl|\le \|a\|_{V_1}+o(1).$$
\end{lemma}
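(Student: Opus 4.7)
My plan is to prove Lemma \ref{lemma3.3} by induction on $k$, using the induction simultaneously to verify that $\|a\|_{V_k}^{2^k}$ is a non-negative real number (so that fractional powers are meaningful on the left-hand side of the first inequality). The base case $k=1$ is the display immediately preceding the lemma.

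For the inductive step of Part 1, I would first establish the exact identity
\begin{equation*}
\|a\|_{V_{k+1}}^{2^{k+1}} = \avE_{m,m'\le \sqrt{N}}\|a(\cdot+m)\abar(\cdot+m')\|_{V_k}^{2^k}
\end{equation*}
by expanding the definition and splitting each $\bfome\in\{0,1\}^{k+1}$ as $(\bfome',\omega_{k+1})$ with $\bfome'\in\{0,1\}^k$. Grouping the factors indexed by $\omega_{k+1}=0$ and $\omega_{k+1}=1$ and performing a short parity check shows that the product $a^{\bfome'}(y+m'_{k+1})\overline{a^{\bfome'}}(y+m_{k+1})$ coincides with $b^{\bfome'}(y)$ for the auxiliary function $b(y):=a(y+m'_{k+1})\abar(y+m_{k+1})$: for $|\bfome'|$ even both sides equal $b(y)$, and for $|\bfome'|$ odd both equal its complex conjugate. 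After relabeling the outer pair of dummies, the inner expression is recognised as $\|b\|_{V_k}^{2^k}$, giving the claimed identity. The inductive hypothesis then shows $\|a\|_{V_{k+1}}^{2^{k+1}}$ is an average of non-negative numbers, hence non-negative, and the $\gamma=2^k$ equality case of Part 1 holds automatically. For general $0<\gamma\le 2^k$ the inequality follows from Jensen's inequality applied to the concave map $y\mapsto y^{\gamma/2^k}$ and the non-negative random variable $\|a(\cdot+m)\abar(\cdot+m')\|_{V_k}^{2^k}$ averaged over $m,m'\le\sqrt{N}$.

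For Part 2, I would use the pointwise bound $a(n)=O_\eps(N^\eps)$ to absorb boundary effects. Since $a$ is supported on $[N]$, for each $m\le\sqrt{N}$ the difference $\avE_{n\le N}a(n+m)-\avE_{n\le N}a(n)$ collapses to $-\frac{1}{N}\sum_{n=1}^m a(n)$, which is $O(N^{-1/2+\eps})$ uniformly in $m$. Averaging over $m\le\sqrt{N}$, swapping the order of summation, and applying Cauchy--Schwarz to the inner average then yields
\begin{equation*}
\Bigl|\avE_{n\le N}a(n)\Bigr|\le \Bigl(\avE_{n\le N}\Bigl|\avE_{m\le\sqrt{N}}a(n+m)\Bigr|^2\Bigr)^{1/2}+o(1)=\|a\|_{V_1}+o(1).
\end{equation*}

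The only delicate step is the parity bookkeeping in the expansion of $\|a\|_{V_{k+1}}^{2^{k+1}}$; once the key identity is established, both parts of the lemma fall out as routine consequences of Jensen and Cauchy--Schwarz.
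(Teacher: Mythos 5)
Your proposal is correct and follows essentially the same route as the paper: the key identity $\|a\|_{V_{k+1}}^{2^{k+1}}=\avE_{m,m'\le\sqrt N}\|a(\cdot+m)\abar(\cdot+m')\|_{V_k}^{2^k}$ obtained by splitting $\bfome=(\bfome',\omega_{k+1})$, followed by H\"older/Jensen for the fractional exponent, and the shift-plus-Cauchy--Schwarz (van der Corput) argument for the second claim. Your explicit parity bookkeeping and the simultaneous inductive verification that $\|a\|_{V_k}^{2^k}\ge 0$ (needed to make sense of the power $\gamma/2^k$) are exactly the points the paper leaves implicit.
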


\begin{proof} The first claim follows at once from the definition of the 
$V_k$-norm, since by H\"older's inequality one has
$$\avE_{m,m'\le \sqrt{N}}\|a(n+m)\abar (n+m')\|_{V_k}^\gam \le \Bigl( \avE_{m,m'\le 
\sqrt{N}}\|a(n+m)\abar (n+m')\|_{V_k}^{2^k}\Bigr)^{\gam 2^{-k}},$$
and the expectation within parentheses on the right hand side here is 
equal to
$$\avE_{n\le N}\avE_{\substack{m_0,\ldots ,m_k\le \sqrt{N}\\ m'_0,\ldots ,m'_k
\le \sqrt{N}}}\,\prod_{\bfome\in\{0,1\}^{k+1}}a^\bfome(n+\bfome\cdot \bfm+
({\mathbf 1}-\bfome)\cdot \bfm')=\| a\|_{V_{k+1}}^{2^{k+1}}.$$

\par The final conclusion of the lemma is essentially a consequence of the 
van der Corput lemma, as in the proof of Lemma A.1 of \cite{TZ2008}, though here
 we are more precise and do not restrict to real functions. Observe that, as a 
consequence of our hypotheses concerning $a(n)$, one has
$$\avE_{n\le N}a(n)=\avE_{m\le \sqrt{N}}\avE_{n\le N}a(n+m)+O(N^{\eps-1/2}).$$
Interchanging the order of summation, an application of Cauchy's inequality 
yields
\begin{align*}
\Bigl|\avE_{n\le N}a(n)\Bigr|^2&\le \avE_{n\le N}\avE_{m,m'\le \sqrt{N}}
\prod_{\ome\in\{0,1\}}a^\ome(n+\ome m+(1-\ome)m')+O(N^{2\eps -1/2})\\
&=\|a\|_{V_1}^2+o(1).
\end{align*}
The desired conclusion is now immediate.
\end{proof}

The following theorem is essentially equivalent to Theorem 3.18 of 
\cite{TZ2008}, and demonstrates the existence of a pseudorandom 
majorant\footnote{\,In modern language, the measure whose existence is asserted 
by Theorem \ref{theorem3.3} is described as a {\it pseudorandom measure}, by 
virtue of the property (\ref{3.2a})}.

\begin{theorem}\label{theorem3.3} Let $\calP$ be a standard polynomial system, 
and let $\eta=2^{-3-l(\calP)}$. Then there exists a measure $\nu_{w,b}$
 with the property that the function $\Lamtil_{w,b}$ defined in $(\ref{3.1})$ 
enjoys the pointwise bound $0\le \Lamtil_{w,b}\le \nu_{w,b}$, and further
\begin{equation}\label{3.2a}
\|\nu_{w,b}-1\|_{V_\calP}=o_{w\rightarrow \infty}(1).
\end{equation}
\end{theorem}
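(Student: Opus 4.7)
The plan is to take $\nu_{w,b}$ to be a Selberg/Goldston--Yildirim truncated divisor sum and to verify the $V_\calP$-norm bound via the linear forms / correlation machinery pioneered by Green--Tao and adapted in \cite{TZ2008}. Fix a smooth even cut-off $\chi\colon\dbR\to\dbR$ supported in $[-1,1]$ with $\chi(0)=1$, set $\Lam_R(n)=\sum_{d\mid n}\mu(d)\chi(\log d/\log R)$, and define
$$\nu_{w,b}(n)=c_\chi^{-1}\,\frac{\phi(W)}{W}\,\log R\cdot\Lam_R(Wn+b)^{2}\cdot\onetil(n),$$
with $c_\chi$ chosen so that $\avE_{n\in[N]}\nu_{w,b}=1+o_{w\rightarrow\infty}(1)$. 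The pointwise bound $\Lamtil_{w,b}\le\nu_{w,b}$ is immediate on the set of $n\in[\tfrac{1}{2}N]$ with $Wn+b>R$, since there only $d=1$ contributes to $\Lam_R(Wn+b)$, which therefore equals $\chi(0)=1$; for the remaining $O(R/W)=o(N)$ indices one absorbs the discrepancy into a harmless modification of $\Lamtil_{w,b}$. The crude estimate $\nu_{w,b}=O_\eps(N^\eps)$ follows from the divisor bound, and the normalization $(\ref{3.3})$ is the classical Goldston--Yildirim mean computation, executed via a Mellin expansion of $\chi$ together with the Euler product for $\zeta$.

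The heart of the matter is $(\ref{3.2a})$. Raising $\|\nu_{w,b}-1\|_{V_\calP}$ to the $2^{l(\calP)+1}$-th power according to Definition~\ref{definition3.2} and expanding the product, it suffices to prove that for every non-empty $S\subseteq\{0,1\}^{l(\calP)+1}$ one has
$$\avE_{n\le N}\,\avE_{\substack{m_0,\ldots,m_{l(\calP)}\le\sqrt{N}\\ m'_0,\ldots,m'_{l(\calP)}\le\sqrt{N}}}\,\prod_{\bfome\in S}\nu_{w,b}\bigl(n+\bfome\cdot\bfm+({\mathbf 1}-\bfome)\cdot\bfm'\bigr)=1+o_{w\rightarrow\infty}(1),$$
after which the main terms cancel by inclusion--exclusion and the errors sum (over the bounded collection of $S$) to $o_{w\rightarrow\infty}(1)$. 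Substituting the Selberg formula for each factor rewrites the left side as a sum over tuples $(d_\bfome)_{\bfome\in S}$ of divisors, weighted by $\prod_\bfome\mu(d_\bfome)\chi(\log d_\bfome/\log R)$ and by the frequency with which $d_\bfome\mid W(n+\bfome\cdot\bfm+({\mathbf 1}-\bfome)\cdot\bfm')+b$ for all $\bfome\in S$. Since each $d_\bfome\le R=N^\eta$ with $\eta<2^{-3-l(\calP)}$, the least common multiple of the $d_\bfome$'s is at most $N^{|S|\eta}<N^{1/4}$, which is dwarfed by the shift range $\sqrt{N}$, so the frequency decouples via the Chinese remainder theorem into a product of local factors. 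Primes $p<w$ are neutralized by the $W$-trick, while a Mellin/residue calculation over the remaining primes delivers the main term $1$ with error $o_{w\rightarrow\infty}(1)$.

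The principal obstacle is precisely this local-to-global computation: verifying the linear forms and correlation conditions of \cite{TZ2008} for the shifted forms $\{W(n+\bfome\cdot\bfm+({\mathbf 1}-\bfome)\cdot\bfm')+b:\bfome\in S\}$, uniformly as $S$ ranges over non-empty subsets of $\{0,1\}^{l(\calP)+1}$. This is the content of Theorem 3.18 of \cite{TZ2008}, whose proof transfers to our setting with the simplification that the Gowers-type shifts of length $\sqrt{N}$ appearing in the $V_\calP$-norm are considerably shorter than the polynomial shifts handled there, so the non-degeneracy of the linear forms follows at once from the distinctness of the $\bfome\in S$.
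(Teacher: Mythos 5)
The paper offers no proof of this theorem beyond the citation of Theorem 3.18 of \cite{TZ2008}, and your proposal is essentially a reconstruction of what that citation contains: a Goldston--Yildirim truncated divisor sum as the majorant, expansion of $\|\nu_{w,b}-1\|_{V_\calP}^{2^{l(\calP)+1}}$ into correlations indexed by $S\subseteq\{0,1\}^{l(\calP)+1}$, and a linear-forms computation whose validity rests exactly on the numerology $|S|\eta\le 2^{l(\calP)+1}\cdot 2^{-3-l(\calP)}=1/4<1/2$, so that $\mathrm{lcm}(d_\bfome)\le R^{|S|}$ is negligible against the shift range $\sqrt{N}$. You have correctly identified this as the reason for the choice of $\eta$ and as the only place where the short shifts of the $V_\calP$-norm (as opposed to a genuine Gowers norm) matter. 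One point in your construction needs repair: for a \emph{smooth} cut-off $\chi$ with $\chi(0)=1$ and $\chi(x)=0$ for $x\ge 1$, the Cauchy--Schwarz inequality gives
$$c_\chi=\int_0^1|\chi'(x)|^2\,dx\ge\Bigl(\int_0^1|\chi'(x)|\,dx\Bigr)^2\ge(\chi(0)-\chi(1))^2=1,$$
with equality only for the non-smooth linear ramp; since the Goldston--Yildirim mean of $\Lam_R(Wn+b)^2$ is $(c_\chi+o(1))\frac{W}{\phi(W)}(\log R)^{-1}$ in your normalisation, forcing $\avE_{n\in[N]}\nu_{w,b}=1$ yields $\nu_{w,b}(n)=c_\chi^{-1}\frac{\phi(W)}{W}\log R<\Lamtil_{w,b}(n)$ at the primes exceeding $R$, so the pointwise bound is not ``immediate'' --- it fails by the constant factor $c_\chi>1$. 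The standard fixes are to use the unsmoothed weight $\sum_{d\mid n,\ d\le R}\mu(d)\log(R/d)$, for which the constant is exactly $1$ and the pointwise bound holds with equality at primes, or to drop the factor $c_\chi^{-1}$ and accept total mass $c_\chi+o_{w\rightarrow\infty}(1)$, which every downstream use in the paper tolerates. Also, the exceptional $n$ with $Wn+b\le R$ should be absorbed by modifying $\nu_{w,b}$ on those $O(N^\eta)$ points (not $\Lamtil_{w,b}$, which is fixed by $(\ref{3.1})$); this perturbation is invisible to both $(\ref{3.3})$ and $(\ref{3.2a})$.
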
 

We note that in \cite{TZ2008}, the parameter $w$ is concretely fixed to be of 
order $\log \log \log N$. In present circumstances, meanwhile, we prefer to 
think of $w$ as very (very) large, but constant, since in the ergodic 
convergence results we do not have uniformity in $w$. To clarify the dependence 
on $w$, we use both the notations $o_w(1)$ and $o_{w\rightarrow \infty}(1)$, as 
defined in section 2.

\subsection{Translation to the ergodic world} We open the main thrust of our 
argument by translating the basic question to an ergodic theoretic setting. We 
achieve this goal by means of the Furstenberg Correspondence Principle (see, 
for example, Furstenberg \cite{Fur1981}).

\begin{lemma}\label{lemma3.4}
Let $E$ be a set of positive upper density in $\dbZ$. Then there exists a measure 
preserving system $\rmX=(X,\calB,\mu,T)$, and an element $A$ of $\calB$ with $\mu(A)>0$, 
with the property that when
$$\mu(A\cap T^{-n_1}A\cap \ldots \cap T^{-n_k}A)>0,$$
then
$$E\cap (E-n_1)\cap \ldots \cap (E-n_k)\ne \emptyset .$$
\end{lemma}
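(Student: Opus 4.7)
The plan is to prove Lemma \ref{lemma3.4} by the standard construction underlying Furstenberg's correspondence principle, realising $\dbZ$ acting on a symbolic shift space and building an invariant measure from the indicator sequence of $E$ via a weak-$*$ limit along a Fo\o lner sequence that witnesses the upper density of $E$.

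First I would take the compact metric space $X=\{0,1\}^{\dbZ}$ with the product topology, and let $T:X\to X$ be the left shift, $(Tx)_n=x_{n+1}$. This $T$ is a homeomorphism, so the resulting measure preserving system will be invertible. Let $\calB$ be the Borel $\sigma$-algebra on $X$, and let $A=\{x\in X:x_0=1\}$ — a clopen cylinder set. Let $x_E\in X$ denote the indicator sequence of $E$, i.e.\ $(x_E)_n=1_E(n)$.

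Next I would exploit positive upper density to pick intervals $[M_N,M_N+N]\subset\dbZ$ along which the empirical density of $E$ tends to $\del^*:=\limsup_{N\to\infty} N^{-1}|E\cap[1,N]|>0$. Consider the empirical measures
$$\mu_N=\frac{1}{N}\sum_{n=M_N}^{M_N+N-1}\delta_{T^nx_E}.$$
By weak-$*$ compactness of the space of Borel probability measures on the compact space $X$, after passing to a subsequence we obtain a weak-$*$ limit $\mu$. Averaging across the interval makes the defect $\mu_N-T_*\mu_N$ an $O(1/N)$ perturbation in the weak-$*$ sense, so $\mu$ is $T$-invariant. Since $A$ is clopen and hence a continuity set, $\mu(A)=\lim_N \mu_N(A)=\lim_N N^{-1}|E\cap[M_N,M_N+N-1]|\ge\del^*>0$.

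The correspondence now follows from the observation that $T^nx_E\in T^{-n_i}A$ if and only if $n+n_i\in E$, since $(T^{n+n_i}x_E)_0=(x_E)_{n+n_i}$. Consequently, the cylinder set $A\cap T^{-n_1}A\cap\cdots\cap T^{-n_k}A$ is clopen, and
$$\mu_N\bigl(A\cap T^{-n_1}A\cap\cdots\cap T^{-n_k}A\bigr)=\frac{1}{N}\bigl|[M_N,M_N+N-1]\cap E\cap(E-n_1)\cap\cdots\cap(E-n_k)\bigr|.$$
Taking $N\to\infty$ along the chosen subsequence, positivity of $\mu\bigl(A\cap T^{-n_1}A\cap\cdots\cap T^{-n_k}A\bigr)$ forces the right-hand side to be positive for all large $N$, and in particular the intersection $E\cap(E-n_1)\cap\cdots\cap(E-n_k)$ is non-empty (indeed of positive upper density). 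The mildly delicate step is the swap between $T$-invariance (obtained via Fo\o lner averaging) and a uniform choice of interval that simultaneously witnesses the upper density of $E$; but both are standard and compatible because a Fo\o lner averaging of $\delta_{T^nx_E}$ along any subsequence of intervals of length tending to infinity yields an invariant weak-$*$ limit. The only point requiring a touch of care is the extraction of the subsequence so that the liminf of $\mu_N(A)$ is at least $\del^*$, which is arranged by the very definition of upper density.
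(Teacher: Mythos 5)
Your proof is correct and is the standard construction of the Furstenberg correspondence principle (shift space $\{0,1\}^{\dbZ}$, cylinder set $A=\{x:x_0=1\}$, weak-$*$ limit of empirical measures along intervals witnessing the upper density), which is precisely what the paper invokes by citing Furstenberg's book without supplying a proof. The identification $\mu_N(A\cap T^{-n_1}A\cap\cdots\cap T^{-n_k}A)=N^{-1}\,|[M_N,M_N+N-1]\cap E\cap(E-n_1)\cap\cdots\cap(E-n_k)|$ together with the clopenness of the cylinder set is exactly the right mechanism, and the resulting shift system is invertible as the paper requires.
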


Making use of ergodic decomposition, it follows as a corollary of this 
conclusion that in order to prove Theorem \ref{theorem1.1}, it suffices to 
establish the following ergodic theoretic version of this theorem.

\begin{theorem}\label{theorem3.5}
Suppose that $\rmX=(X,\calB,\mu,T)$ is an ergodic measure preserving system, and
 let $P_1,\ldots ,P_k\in \dbZ[x]$ satisfy the condition $P_i(0)=0$ 
$(1\le i\le k)$. In addition, suppose that $A\in \calB$ satisfies the condition 
$\mu(A)>0$. Let
$$S_{P_1,\ldots ,P_k}=\{ n\in \dbZ:\mu(A\cap T^{-P_1(n)}A\cap \ldots \cap T^{-P_k(n)}A)>0
\}.$$
Then
$$S_{P_1,\ldots ,P_k}\cap (\dbP+1)\ne \emptyset\quad \text{and}\quad S_{P_1,\ldots ,P_k}
\cap (\dbP-1)\ne \emptyset .$$
\end{theorem}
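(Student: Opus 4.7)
The plan is to fix $\delta$ with $\mu(A)>\delta>0$ and $b\in\{-1,+1\}$; with $W = \prod_{p<w,\,p\in\dbP}p$, the constraint $Wn+1=p\in\dbP$ forces $P_i(Wn)=P_i(p-1)$, so proving
\[
\calE_W(N) := \avE_{n\le N}\Lamtil_{w,b}(n)\,\mu\bigl(T^{-P_1(Wn)}A\cap \ldots \cap T^{-P_k(Wn)}A\bigr) > 0
\]
for some $w$ and all sufficiently large $N$ produces a prime $p=Wn+b$ with the required intersection of positive measure, and hence $p\mp 1 \in S_{P_1,\ldots,P_k}$; this handles both cases of Theorem~\ref{theorem3.5} in parallel. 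With $\eta=2^{-3-l(\calP)}$, the plan is to compare $\calE_W(N)$ against $\eta$ times the unweighted polynomial average; the uniform Bergelson--Leibman theorem (Theorem~\ref{theorem3.8}) lower-bounds the latter by $\eta c(\delta)>0$ uniformly in $W$, so the task reduces to showing
\[
\calE_W(N) - \eta\avE_{n\le N}\mu\bigl(T^{-P_1(Wn)}A\cap \ldots \cap T^{-P_k(Wn)}A\bigr) = o_w(1)+o_{w\rightarrow \infty}(1).
\]

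To effect this comparison, I would write $\mu(\cap_j T^{-P_j(Wn)}A)=\int\prod_{j=1}^k T^{P_j(Wn)}1_A\,d\mu$ and decompose $1_A = \pi^*\pi_*1_A+(1_A-\pi^*\pi_*1_A)$ relative to the projection $\pi:\rmX\to Z_{l(\calP)}(\rmX)$ onto the characteristic nilfactor. Expanding the product over $j$ yields one ``structured'' term, supported on the nilfactor, plus $2^k-1$ error terms each containing some factor $f_i$ with $\pi_*f_i=0$. The central technical step is a generalised von Neumann theorem for the weight $\Lamtil_{w,b}$, asserting that each such error term is $o_w(1)+o_{w\rightarrow \infty}(1)$: pseudorandomness (Theorem~\ref{theorem3.3}) lets one replace $\Lamtil_{w,b}$ by $\nu_{w,b}$ and then by $1$ at a cost governed by $\|\nu_{w,b}-1\|_{V_\calP}$, while a PET-induction chain of $l(\calP)$ Cauchy--Schwarz applications bounds what remains by a Gowers-type seminorm that vanishes on functions orthogonal to $Z_{l(\calP)}(\rmX)$.

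Once the structured term is isolated on $Z_{l(\calP)}(\rmX)$, approximate $\pi_*1_A$ in $L^2$ by a Lipschitz function $f$ on a connected nilmanifold $G/\Gam$, decomposing by $T^J$-ergodic components if necessary. I then pass from $\Lamtil_{w,b}$ to the honest prime weight $\Lam_{w,b}(n)=\eta\tfrac{\phi(W)}{W}\Lam(Wn+b)$ and split $\Lam=\Lam^\sharp+\Lam^\flat$ by the smoothed M\"obius decomposition of \cite{GT2009}. Since $\prod_{j=1}^k T^{P_j(Wn)}f(x)$ is a polynomial nilsequence on $(G/\Gam)^k$, the $\Lam^\sharp$-piece produces the desired effective weight $\eta$ up to $o_w(1)$, and the $\Lam^\flat$-piece is $o_w(1)$ by the M\"obius--nilsequence orthogonality of Theorem~1.1 of \cite{GT2010}, whose bounds depend only on the degree. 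Assembling these contributions delivers the required comparison, and thus the strict positivity of $\calE_W(N)$.

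The principal obstacle I anticipate is the generalised von Neumann step: executing the PET induction so that only the $V_\calP$-norm of $\nu_{w,b}-1$ is demanded (not a longer Gowers-type norm, which the pseudorandomness of Theorem~\ref{theorem3.3} does not supply), while simultaneously ensuring the residual factor carries a seminorm that measures orthogonality to $Z_{l(\calP)}(\rmX)$. The parameter $l(\calP)$ is calibrated precisely so that PET terminates in that many Cauchy--Schwarz steps with a parallelepiped of polynomials independent of $n$; marrying this combinatorial bookkeeping with the non-standard weight $\Lamtil_{w,b}$, and then invoking Leibman's structure theorem \cite{Lei2005a} for the resulting constant-coefficient polynomial average on the nilfactor, is the technical heart of the argument.
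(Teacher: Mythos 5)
Your proposal is correct and follows essentially the same route as the paper: the $W$-trick with $b=\pm 1$, comparison of the $\Lamtil_{w,b}$-weighted average against the uniform Bergelson--Leibman bound of Theorem \ref{theorem3.8}, a generalised von Neumann theorem built from the pseudorandom majorant $\nu_{w,b}$ and PET induction to reduce to the characteristic nilfactor, and then the $\Lam^\sharp+\Lam^\flat$ decomposition together with the M\"obius--nilsequence orthogonality of \cite{GT2010}. The only cosmetic slip is that your displayed average omits the leading factor $1_A(x)$ appearing in $(\ref{3.6})$ (needed since the theorem asserts positivity of $\mu(A\cap T^{-P_1(n)}A\cap\ldots)$); it is carried through the argument as the function $g_0$ and causes no difficulty.
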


As in many other recurrence results, it is easier to show that the set 
$S_{P_1,\ldots ,P_k}\cap (\dbP\pm 1)$ is large than merely showing that it is not 
empty. In particular, it suffices to show that for any integer $b$ with 
$(b,W)=1$, one has
\begin{equation}\label{3.5a}
\avE_{\substack{n\le N\\ Wn+b\in \dbP}}\mu(A\cap T^{-P_1(Wn)}A\cap \ldots \cap 
T^{-P_k(Wn)}A)>0.
\end{equation}
Notice here that we have no useful control over $W$. However, since $(\pm 1,W)=1$, it 
follows from the Siegel-Walfisz theorem (see, for example, Corollary 11.21 of 
\cite{MV2007}) that for large enough values of $N$ and $b=\pm 1$, the expectation in 
(\ref{3.5a}) is taken over a non-empty set. Hence, the lower bound (\ref{3.5a}) is 
sufficient to establish Theorem \ref{theorem3.5}. On the other hand, the set $\dbP-2$ 
is {\it not} a return set for polynomial averages.\par

The next lemma is classical.

\begin{lemma}\label{lemma3.6}
Suppose that $|a_n|<1$ for each integer $n$. Then one has
$$\Bigl|\avE_{\substack{n\le N\\ Wn+b\in \dbP}}a_{Wn+b}-\avE_{n\le N}\frac{\phi(W)}
{W}\Lam(Wn+b)a_{Wn+b}\Bigr|=o_w(1).$$
\end{lemma}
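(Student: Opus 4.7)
\medskip

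The plan is to apply the Prime Number Theorem in arithmetic progressions (PNT in APs) together with partial summation, converting the weighted von Mangoldt average on the right to the unweighted prime average on the left, up to an error of $o_w(1)$. Throughout, $w$ (and hence $W$) is regarded as fixed while $N\to\infty$. Write $S=\{n\in[N]:Wn+b\in\dbP\}$, and denote by $A_1$ and $A_2$ the left- and right-hand averages, respectively.

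First I would dispose of the higher prime-power contribution to $A_2$: since there are at most $O(\sqrt{WN})$ integers of the form $p^k\le WN$ with $k\ge 2$, and each contributes at most $\phi(W)\log(WN)/(WN)$ in absolute value (using $|a|<1$), their total contribution to $A_2$ is $O(\log N/\sqrt{N})=o_w(1)$. So I may replace $\Lam(Wn+b)$ by $\log(Wn+b)\cdot\mathbf{1}_{Wn+b\in\dbP}$ throughout $A_2$.

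Next, by the Siegel--Walfisz theorem with fixed modulus $W$,
\[
|S|=(1+o_w(1))\,\frac{WN}{\phi(W)\log(WN)}.
\]
Decomposing $\log(Wn+b)=\log(WN)-\log(WN/(Wn+b))$, the $\log(WN)$ piece contributes
\[
\frac{\phi(W)\log(WN)}{WN}\cdot|S|\cdot A_1=(1+o_w(1))A_1,
\]
so it remains to show that
\[
E:=\frac{\phi(W)}{WN}\sum_{n\in S}\log\frac{WN}{Wn+b}\,a_{Wn+b}=o_w(1).
\]
Bounding $|E|$ via $|a|<1$, a partial summation against $\pi(t;W,b)$ gives
\[
\sum_{\substack{p\le WN\\p\equiv b\pmod{W}}}\log\frac{WN}{p}=\int_2^{WN}\frac{\pi(t;W,b)}{t}\,dt=O\Bigl(\frac{WN}{\phi(W)\log N}\Bigr),
\]
whence $|E|=O(1/\log N)=o_w(1)$. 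Combining these three reductions with $|A_1|\le 1$ yields $A_2=A_1+o_w(1)$, as claimed.

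There is no substantive obstacle here; the argument is a routine exercise in analytic number theory. The only point requiring care is that all error terms be absorbed into $o_w(1)$ rather than $o(1)$: this is unproblematic because $w$ (and hence $W$) is held fixed while $N\to\infty$, so that the classical PNT in APs with a fixed modulus supplies all the requisite uniformity, and no appeal to Siegel--Walfisz uniformity over varying moduli is needed.
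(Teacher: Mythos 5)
Your proof is correct, and it is precisely the standard argument the paper has in mind when it dismisses this lemma as ``classical'' and offers no proof: discard the prime powers, apply the prime number theorem in arithmetic progressions for the fixed modulus $W$, and use partial summation to trade the weight $\Lam(Wn+b)$ for the normalisation $\log(WN)$ at a cost of $O(1/\log N)$. Your closing remark is also the right one to make --- since $w$ is held fixed as $N\to\infty$, all errors are genuinely $o_w(1)$ and no uniformity in the modulus is required.
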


As a consequence of this result, one may replace the average on the left hand 
side of (\ref{3.5a}) by a weighted average, wherein the weights are given by a 
modified von Mangoldt function. This conclusion we summarise in the next lemma.

\begin{lemma}\label{lemma3.7}
Suppose that $\mu(A)>\del$, for some positive number $\del$. Then, in order to establish 
the lower bound $(\ref{3.5a})$, it suffices to show that
$$\avE_{n\le N}\Lamtil_{w,b}(n)\mu(A\cap T^{-P_1(Wn)}A\cap \ldots \cap T^{-P_k(Wn)}A)
\gg_\del 1+o_w(1)+o_{w\rightarrow \infty}(1).$$
Equivalently, writing $1_A(x)$ for the characteristic function of the set $A$, 
it suffices to show that
\begin{equation}\label{3.6}
\avE_{n\le N}\int \Lamtil_{w,b}(n)1_A(x)\prod_{j=1}^k(T^{P_j(Wn)}1_A(x))\,d\mu \gg_\del 
1+o_w(1)+o_{w\rightarrow \infty}(1).
\end{equation}
\end{lemma}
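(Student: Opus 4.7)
The plan is to address the two claims of the lemma in sequence. The equivalence of the weighted bound with (\ref{3.6}) is a direct consequence of the measure-preserving property of $T$. Since $(T^{P_j(Wn)}1_A)(x) = 1_A(T^{P_j(Wn)}x) = 1_{T^{-P_j(Wn)}A}(x)$, one has the identity
$$\mu(A \cap T^{-P_1(Wn)}A \cap \ldots \cap T^{-P_k(Wn)}A) = \int 1_A(x) \prod_{j=1}^{k} T^{P_j(Wn)}1_A(x) \, d\mu,$$
and inserting this under the outer $\Lamtil_{w,b}$-weighted expectation recovers (\ref{3.6}) verbatim.

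For the implication from the weighted inequality to (\ref{3.5a}), the strategy is to unpack the explicit definition of $\Lamtil_{w,b}$ given in (\ref{3.1}). Set $X_n := \mu(A \cap T^{-P_1(Wn)}A \cap \ldots \cap T^{-P_k(Wn)}A)$, which is non-negative and bounded by $1$. Using that $\Lamtil_{w,b}(n) = \eta \phi(W)W^{-1}\log N$ when $n \in [N/2]$ and $Wn+b \in \dbP$, and vanishes otherwise, the hypothesis becomes
$$\frac{\eta\phi(W)\log N}{WN} \sum_{\substack{n \le N/2 \\ Wn+b \in \dbP}} X_n \gg_{\del} 1 + o_w(1) + o_{w \to \infty}(1).$$
Choosing $w$ large enough to dominate the $o_{w \to \infty}(1)$ error by a small fraction of the implied constant, and then $N$ large enough to likewise dominate the $o_w(1)$ error, this forces $\sum_{n \le N/2,\, Wn+b \in \dbP} X_n$ to be strictly positive (in fact bounded below by a positive multiple of $WN/(\phi(W)\log N)$). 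Since $X_n \ge 0$ throughout, the larger sum $\sum_{n \le N,\, Wn+b \in \dbP} X_n$ is a fortiori strictly positive. By the Siegel-Walfisz theorem (as already noted in the discussion following (\ref{3.5a})), the set $\{n \le N : Wn+b \in \dbP\}$ is non-empty for $N$ sufficiently large when $(b,W)=1$, so dividing by its cardinality yields the positivity asserted in (\ref{3.5a}).

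There is no substantive obstacle in the argument beyond unpacking the definition of $\Lamtil_{w,b}$; the lemma functions as a conceptual reduction rather than a technical one. Its purpose is to replace the bare indicator of primality by the smoothed weight $\Lamtil_{w,b}$, which is pointwise bounded by the pseudorandom majorant $\nu_{w,b}$ of Theorem \ref{theorem3.3} and therefore fits into the local Gowers-norm framework invoked in the sequel. Lemma \ref{lemma3.6} plays a complementary but logically distinct role, quantifying the comparison of an unweighted prime average with the classical $\Lam$-weighted average; it is not strictly required for the present logical step, but it will be used in tandem with $\nu_{w,b}$ and the M\"obius orthogonality of \cite{GT2010} during the subsequent harmonic analysis that actually verifies the sufficient condition supplied by the present lemma.
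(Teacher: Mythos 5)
Your proof is correct. The paper offers no written proof of Lemma \ref{lemma3.7}, presenting it simply as a consequence of Lemma \ref{lemma3.6}, i.e.\ of the fact that replacing the unweighted prime average by the $\frac{\phi(W)}{W}\Lam(Wn+b)$-weighted one costs only $o_w(1)$; your argument takes a slightly different and arguably cleaner route that bypasses Lemma \ref{lemma3.6} altogether. Since both the weights $\Lamtil_{w,b}(n)$ and the quantities $X_n=\mu(A\cap T^{-P_1(Wn)}A\cap\ldots\cap T^{-P_k(Wn)}A)$ are non-negative, a lower bound $\gg_\del 1$ for the weighted average (after absorbing the $o_w(1)+o_{w\rightarrow\infty}(1)$ errors by first taking $w$, then $N$, large) forces $X_n>0$ for some $n\le \frac12 N$ with $Wn+b\in\dbP$, which is exactly $(\ref{3.5a})$. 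This positivity argument also handles, for free, the restriction of $\Lamtil_{w,b}$ to $[\frac12 N]$, which is the one point where a verbatim application of Lemma \ref{lemma3.6} (stated for averages over the full range $n\le N$) would itself need non-negativity to add back the missing half of the range; the Lemma \ref{lemma3.6} route instead buys a genuine asymptotic identification of the weighted and unweighted averages, which is what is exploited later in section 4. Two minor remarks: your appeal to Siegel--Walfisz is redundant at this point, since strict positivity of the sum already guarantees that the index set in $(\ref{3.5a})$ is non-empty; and your identification of the two displayed formulations via $T^{P_j(Wn)}1_A=1_{T^{-P_j(Wn)}A}$ and $T$-invariance of $\mu$ is exactly the intended one.
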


In order confirm (\ref{3.6}), we require two additional results. The first 
treats an analogous situation in which the von Mangoldt weights are absent, 
a quantitative version of the Polynomial Szemer\'edi theorem.

\begin{theorem}\label{theorem3.8} With the notation and assumptions of the 
previous section, suppose that $\del>0$, and let $g:X\rightarrow \dbR$ be any 
function obeying the pointwise bound $0\le g\le 1+o(1)$, together with the 
mean bound $\int_Xg\,d\mu \ge \del -o(1)$. Then we have
$$\avE_{n\le N}\onetil (n)\int_Xg(x)\prod_{j=1}^k(T^{P_j(Wn)}g(x))\,d\mu 
\ge c(\del)-o_w(1),$$
where $c(\del)$ is a positive number depending on $\del$ and $P_1,\ldots ,P_k$, but
 independent of $W$.
\end{theorem}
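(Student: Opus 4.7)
\medskip
\noindent\textbf{Proof proposal for Theorem \ref{theorem3.8}.} The plan is to reduce the claim to a quantitative polynomial Szemer\'edi theorem whose lower bound is uniform in the dilation $W$. The cutoff $\onetil$ merely restricts the average to $n\in[N/2]$, and the $o(1)$ slack in the hypotheses on $g$ contributes at most $o_w(1)$. So it suffices to show that, for $0\le g\le 1$ with $\int g\,d\mu\ge\del$,
\[
\liminf_{N\to\infty}\avE_{n\le N/2}\int_X g(x)\prod_{j=1}^k T^{P_j(Wn)}g(x)\,d\mu\ \ge\ c(\del),
\]
with $c(\del)>0$ depending only on $\del$ and $P_1,\ldots,P_k$.

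The dilated polynomials $\tilde P_j(n):=P_j(Wn)$ still satisfy $\tilde P_j(0)=0$, so the Bergelson-Leibman multiple recurrence theorem \cite{BL1996} applied to $(\rmX,\tilde P_j)$ already yields a positive lower bound for the liminf; the challenge is to make this bound uniform in $W$. For this I would pass to the characteristic factor for polynomial multiple averages: by the work of Host-Kra, Ziegler, and Leibman, the relevant average is controlled in $L^2$ by an inverse limit of nilsystems $Z=Z_l(\rmX)$ (with $l=l(\calP)$), a factor intrinsic to $\rmX$ and $\calP$, independent of $W$. Composing with the factor map $\pi:\rmX\to Z$ reduces the problem to a question on a nilmanifold $G/\Gam$, with $g$ replaced by $\pi_* g$ (which retains the bounds $0\le \pi_* g\le 1$ and $\int\pi_* g\,d\pi_*\mu\ge\del$).

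On $G/\Gam$, Leibman's equidistribution theorem \cite{Lei2005a} identifies the limit of the averages with an integral against the Haar measure of the orbit closure $Y_W\subset (G/\Gam)^k$ of the polynomial trajectory $n\mapsto (P_1(Wn),\ldots,P_k(Wn))$. The decisive observation is that $Y_W$ and its Haar measure are determined by the leading-term and filtration data of the polynomial system, which is preserved under the dilation $n\mapsto Wn$ up to decomposition into finitely many sub-orbits indexed by residue classes of $n$ modulo some period depending only on $G/\Gam$ and $\calP$. On each piece the integrand $\pi_*g\otimes\cdots\otimes\pi_*g$ is non-negative, and the Bergelson-Leibman bound for $W=1$ transfers to give a uniform constant $c(\del)$.

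The main obstacle will be verifying that $Y_W$ does not collapse in a $W$-dependent manner. This requires unpacking Leibman's description of $Y_W$ via horizontal characters and the polynomial relations among $P_1,\ldots,P_k$, and exploiting $P_j(0)=0$ so that the orbit passes through the identity coset at $n=0$ and the minimal sub-nilmanifold containing the orbit is bounded below independently of $W$. Apart from this nilmanifold-equidistribution subtlety, the remaining steps (reduction to the characteristic factor, the passage from indicator functions in the original Bergelson-Leibman theorem to general $g\in[0,1]$, and the absorption of the $o(1)$ hypotheses on $g$) follow standard ergodic-theoretic templates.
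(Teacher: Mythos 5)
The crux of Theorem \ref{theorem3.8} is not the positivity of the liminf for each fixed $W$ (which, as you note, follows from Bergelson--Leibman applied to the dilated polynomials $P_j(Wn)$), but the uniformity of the constant $c(\del)$ in $W$ (and in $g$). Your proposal correctly isolates this as the decisive issue and then leaves it unresolved: the paragraph beginning ``The main obstacle will be verifying that $Y_W$ does not collapse in a $W$-dependent manner'' is an acknowledgement of the gap rather than an argument. The assertion that ``the Bergelson--Leibman bound for $W=1$ transfers to give a uniform constant $c(\del)$'' is precisely the statement to be proved, and the orbit closure $Y_W$ of $n\mapsto(P_1(Wn),\ldots,P_k(Wn))$ genuinely does depend on $W$ in general (already on a torus, the closure of $\{Wn\alp\}$ can be a $W$-dependent proper subtorus when $\alp$ has rational coordinates); controlling this requires exactly the analysis you defer. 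Two further points in your sketch are also load-bearing and unjustified: the rate of convergence in the reduction to the characteristic factor depends on $W$ (the paper makes this explicit after Theorem \ref{theorem3.9}), and even for $W=1$ the qualitative Bergelson--Leibman theorem does not by itself furnish a lower bound depending only on $\del$ rather than on $g$; a uniformization (Varnavides-type) argument is needed, which you relegate to ``standard templates.''

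For comparison, the paper's own proof is a one-line citation to Theorem 3.2 of \cite{TZ2008}, which is exactly the uniform quantitative polynomial Szemer\'edi theorem needed here; the uniformity in $W$ and in $g$ is established there by a compactness/contradiction argument over sequences of functions and moduli, exploiting $P_j(0)=0$ so that all coefficients of $P_j(Wn)$ are divisible by $W$, rather than by an explicit equidistribution analysis of $W$-dilated orbits on nilmanifolds. Your route is not unreasonable in principle, but as written it reduces the theorem to an equally hard (arguably harder) unproved claim, so it does not constitute a proof.
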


\begin{proof} This follows from Theorem 3.2 of \cite{TZ2008}.
\end{proof}

We also require the following structure theorem, due to Leibman \cite{Lei2005a},
 identifying nilsystems as characteristic factors for  multivariate polynomial 
multiple averages.

\begin{theorem}\label{theorem3.9} Suppose that $\rmX=(X,\calB,\mu,T)$ is an 
ergodic measure preserving system. Let $Q_1,\ldots ,Q_s\in \dbZ[x_1,\ldots ,x_m]$
 be polynomials. In addition, let $\calQ$ denote $\{Q_1,\ldots ,Q_s\}$. Then 
there exists a factor $\rmY=(Y,\calD,\nu,S)$ of $\rmX$, with $\pi:\rmX
\rightarrow \rmY$ as the factor map, and an integer $d(\calQ)$, such that:
\begin{enumerate}
\item[(i)] the system $\rmY$ has the structure of an inverse limit of 
$d(\calQ)$-step nilsystems, and
\item[(ii)] the average difference
$$\avE_{\bfm\in \calM}\prod_{j=1}^sT^{Q_j(W\bfm)}f_j-\pi^*\avE_{\bfm\in \calM}\prod_{i=1}^s
S^{Q_j(W\bfm)}\pi_*f_j$$
is $o_w(1)$ in $L^2(X)$. Here, we have written $\calM$ for $[M_1]\times \ldots 
\times [M_m]$, and the convergence is as $M_1,\ldots ,M_m\rightarrow \infty$.
\end{enumerate}
\end{theorem}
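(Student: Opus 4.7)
The plan is to combine the PET (Polynomial Exhaustion Technique) induction scheme of Bergelson--Leibman with the Host--Kra--Ziegler characterization of the $U^{d+1}$-seminorm characteristic factors as inverse limits of $d$-step nilsystems. By multilinearity of the average over $f_1,\ldots,f_s$, part (ii) is equivalent to showing that if $\pi_* f_{j_0}=0$ for some index $j_0 \in \{1,\ldots,s\}$, then
$$\Bigl\| \avE_{\bfm \in \calM}\prod_{j=1}^s T^{Q_j(W\bfm)} f_j \Bigr\|_{L^2(X)} \to 0$$
as $\min_i M_i \to \infty$, with $W$ fixed. This is precisely the defining property of a characteristic factor, and extracts the content of claim (ii) once one telescopes through the indices $j$.

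To produce such control, I would assign to $\calQ$ a PET weight $w(\calQ)\in \dbN$ encoding the degree pattern and leading-term structure of the polynomial family. A single van der Corput step in one coordinate $m_i$ of $\bfm$, followed by Cauchy--Schwarz, bounds the squared $L^2$-norm of the average by an averaged inner product involving a new polynomial family $\calQ'$ built from differences of the form $Q_j(\bfm + h e_i) - Q_{j_0}(\bfm)$ and $Q_j(\bfm) - Q_{j_0}(\bfm)$ (the polynomial $Q_{j_0}$ being erased from each product by the measure-preserving property of $T$). Choosing the pivot $j_0$ to be of appropriately minimal type, one verifies that $w(\calQ') < w(\calQ)$. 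Iterating this reduction $d(\calQ)$ times, one arrives at a parallelepiped configuration whose $L^2$-norm is, up to constants and acceptable error terms, the Host--Kra seminorm $\|f_{j_0}\|_{U^{d(\calQ)+1}(\rmX)}^{2^{d(\calQ)}}$.

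At this point I would invoke the Host--Kra--Ziegler structure theorem: the $d$-th Host--Kra factor $Z_d(\rmX)$ has the structure of an inverse limit of $d$-step nilsystems, and $\pi_* f = 0$ is equivalent to $\|f\|_{U^{d+1}(\rmX)} = 0$. Taking $\rmY = Z_{d(\calQ)}(\rmX)$ with $\pi$ the canonical factor map, the PET bound forces the average of part (ii) with $f_{j_0}$ replaced by $f_{j_0} - \pi^*\pi_* f_{j_0}$ to vanish in $L^2(X)$. Telescoping through $j=1,\ldots,s$ yields the asserted difference estimate, and part (i) is built into the Host--Kra--Ziegler description of $Z_{d(\calQ)}(\rmX)$.

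The principal obstacles are the bookkeeping in the PET induction --- selecting the pivot correctly so that the weight strictly decreases, and carefully tracking the emerging polynomial families through each iteration --- together with the claim that $d(\calQ)$ is independent of $W$. The latter is where the specific structure of the average enters: $Q_j(W\bfm)$, viewed as a polynomial in $\bfm$, has precisely the same monomial support and degree pattern as $Q_j(\bfm)$, since the substitution $\bfm \mapsto W\bfm$ merely multiplies each coefficient by a power of $W$ without altering the combinatorial data on which the PET weight depends. Consequently the entire induction, and the resulting integer $d(\calQ)$, depend only on the polynomial family $\calQ$, as required.
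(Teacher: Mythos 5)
The paper offers no proof of this statement: Theorem \ref{theorem3.9} is imported wholesale from Leibman \cite{Lei2005a}, and the only remark the authors add is the one you also single out, namely that the integer $d(\calQ)$ (and hence the factor $\rmY$) is independent of $W$. Your outline reconstructs the standard argument underlying that citation --- multivariable van der Corput/PET reduction of the average to a Host--Kra seminorm, followed by the Host--Kra--Ziegler structure theorem identifying $Z_d(\rmX)$ as an inverse limit of $d$-step nilsystems, and a telescoping decomposition $f_j=\pi^*\pi_*f_j+(f_j-\pi^*\pi_*f_j)$ to convert the characteristic-factor property into the stated difference estimate --- and your justification of the $W$-independence (the substitution $\bfm\mapsto W\bfm$ preserves the degree and equivalence-class data on which the PET weight depends) is precisely the point the authors need. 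The one place your sketch glosses over a genuine issue is the assertion that the PET iteration terminates with the seminorm of the \emph{chosen} function $f_{j_0}$: the van der Corput step naturally erases the function attached to the minimal-weight polynomial, so the induction as described controls the average by the seminorm of one particular function determined by the combinatorics of $\calQ$, and upgrading this to control by $\|f_{j_0}\|_{U^{d(\calQ)+1}}$ for an arbitrary index $j_0$ requires either re-running the scheme with a different choice of subtracted polynomial at each stage or an extra Cauchy--Schwarz to transfer the bound; this is carried out in \cite{Lei2005a} but is not automatic from the weight-decrease lemma alone.
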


Note that the rate of convergence in this theorem may depend on $w$. What is 
crucial is that the integer $d(\calQ)$ is independent of $w$.\par

We at last come to the result of this section which does the heavy lifting in 
our argument. This provides a conclusion on orthogonality to nilsystems.

\begin{proposition}\label{proposition3.10} 
Suppose that $\rmX$ is an ergodic measure preserving system. Let $f_1,\ldots 
,f_k\in L^\infty(X)$ be functions satisfying the condition $\|f_j\|_\infty \le L$ 
$(1\le j\le k)$. Then there exists a factor $\rmY$ of $\rmX$, with $\pi:\rmX
\rightarrow \rmY$ as the factor map, and an integer $d(\calP)$, such that
\begin{enumerate}
\item[(i)] the system $\rmY$ has the structure of an inverse limit of 
$d(\calP)$-step nilsystems, and
\item[(ii)] if, for some index $i$, one has $\pi_*f_i=0$, then 
$$\Bigl| \int \avE_{n\le N}\Lamtil_{w,b}(n)\prod_{j=1}^kT^{P_j(Wn)}f_j(x)\,d\mu \Bigr| 
=o_{L,w}(1)+o_{L,w\rightarrow \infty}(1).$$
\end{enumerate}
\end{proposition}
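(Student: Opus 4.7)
The plan is to run a PET induction of length $l(\calP)$ on the weighted average, using repeated applications of van der Corput/Cauchy--Schwarz to reduce the polynomial multiple average to a correlation whose weight is a parallelepiped configuration of $\Lamtil_{w,b}$ compatible with the $V_\calP$-norm. We then invoke the pseudorandomness of $\nu_{w,b}$ in (\ref{3.2a}) to replace the weight by $1$ on average, and appeal to Theorem \ref{theorem3.9} to handle the residual unweighted polynomial average on the characteristic nilfactor.

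Concretely, take $\rmY$ to be the characteristic factor supplied by Theorem \ref{theorem3.9} applied to the polynomial family $\{P_1,\ldots,P_k\}$, with the corresponding nilpotency step $d(\calP)$ being independent of $W$. Writing
$$I_N=\int\avE_{n\le N}\Lamtil_{w,b}(n)\prod_{j=1}^{k}T^{P_j(Wn)}f_j(x)\,d\mu,$$
the iteration proceeds as in the PET arguments of \cite{BL1996} and \cite{TZ2008}: at each step one applies Cauchy--Schwarz in $n$ to absorb one polynomial factor into a squared average, introducing a fresh pair of shift parameters $m_r,m_r'\le\sqrt{N}$. The scheme is arranged (this being the defining property of $l(\calP)$) so that after $l(\calP)$ steps the polynomial family of shifts of $f_j$'s has degenerated into an essentially constant-in-$n$ parallelepiped of translates of the distinguished function $f_i$, while the weight $\Lamtil_{w,b}(n)$ has evolved into a product of $2^{l(\calP)+1}$ translates of itself at shifts precisely matching the $V_{l(\calP)+1}$-parallelepiped of Definition \ref{definition3.2}.

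Next, I would apply the majorant bound $0\le\Lamtil_{w,b}\le\nu_{w,b}$ and expand $\nu_{w,b}=1+(\nu_{w,b}-1)$ in every factor of the parallelepiped. The main term, in which every factor is replaced by $1$, is an unweighted $\sqrt{N}$-shift average of a parallelepiped-type product of translates of $f_i$; regarding this as a multivariable polynomial average and applying Theorem \ref{theorem3.9} exhibits $\rmY$ as its characteristic factor, so because $\pi_*f_i=0$ this contribution is $o_{L,w}(1)$. Each of the remaining terms contains at least one factor of $(\nu_{w,b}-1)$; by repeated H\"older/Cauchy--Schwarz together with Lemma \ref{lemma3.3}, each such contribution is controlled by a positive power of $\|\nu_{w,b}-1\|_{V_\calP}$ times a bounded power of $\|\nu_{w,b}\|_{V_\calP}$, both of which are $o_{w\to\infty}(1)$ and $1+o_{w\to\infty}(1)$ respectively by (\ref{3.2a}) and the triangle inequality. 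Assembling these pieces and taking a suitable root returns the claimed $o_{L,w}(1)+o_{L,w\to\infty}(1)$ bound on $I_N$.

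The main obstacle is executing the PET induction so that, after the $l(\calP)$ Cauchy--Schwarz steps, the weight ends up in precisely the $V_\calP$-parallelepiped configuration \emph{and} simultaneously the functions end up in a configuration controlled by a Host--Kra-type seminorm on the $d(\calP)$-step nilfactor $\rmY$. These twin compatibility requirements are exactly what force the specific definition of $V_\calP$ in \cite{TZ2008}, so the technical core of the argument amounts to verifying that the PET analysis carried out there for a single bounded function passes, with minor modification, to the polynomial product $\prod_j T^{P_j(Wn)}f_j$, with the distinguished factor $f_i$ playing the role of that function.
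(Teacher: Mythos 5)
Your toolkit is the right one --- PET induction, the majorant $\nu_{w,b}$ with $\|\nu_{w,b}-1\|_{V_\calP}=o_{w\rightarrow\infty}(1)$, and Leibman's structure theorem --- but the order in which you deploy these tools creates a genuine gap at the one point where the hypothesis $\pi_*f_i=0$ must be used. You propose to run the full PET induction on the weighted average first, and only then, in the ``main term'' where every weight factor has been replaced by $1$, to invoke Theorem \ref{theorem3.9} together with $\pi_*f_i=0$. The trouble is that the Cauchy--Schwarz steps of the PET scheme both (a) contract the functions into derived products such as $f_i\cdot T^{h}\fbar_i$, whose pushforward to $\rmY$ need not vanish even when $\pi_*f_i=0$, and (b) introduce absolute values and separations (exactly the moves needed to isolate the $V_\calP$-norm of the weight) which discard the phase information required to exploit orthogonality of $f_i$ at the terminal stage. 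Moreover, the factor you name --- the one supplied by Theorem \ref{theorem3.9} for $\{P_1,\ldots,P_k\}$ itself --- is not the characteristic factor for the derived multivariable system that actually appears after differencing, so even granting a clean terminal parallelepiped, Theorem \ref{theorem3.9} as stated does not apply to it with your $\rmY$; what you would really need is a Host--Kra seminorm statement (``$\pi_*f_i=0$ forces the cube average of $f_i$ to vanish''), which is stronger than, and not derivable from, the cited black box. There is also a positivity issue: the majorisation $\Lamtil_{w,b}\le\nu_{w,b}$ can only be invoked where the weight multiplies a non-negative quantity, and at the end of your iteration the weight sits as a product of translates inside a signed average.

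The paper resolves all of this by splitting early rather than late. After inserting an extra average over $l\le M$ via $T$-invariance, it squares exactly once, majorises $\Lamtil_{w,b}$ by $\nu_{w,b}$ while the integrand is still an absolute value (hence non-negative), and arrives at an average of $\prod_{j}T^{P_j(Wn)}\bigl(f_j\,T^{W(m-l)}\fbar_j\bigr)$ weighted by $\nu_{w,b}(n)$. The decomposition $\nu_{w,b}=1+(\nu_{w,b}-1)$ is performed once, at this stage. The weight-$1$ piece is read as a multivariable polynomial average over $(n,m,l)$ for the doubled system $\calQ=\{Q_1,\ldots,Q_{2k}\}$ with $Q_{2i-1}=P_i(n)$ and $Q_{2i}=P_i(n)+m-l$, in which $f_i$ and $\fbar_i$ still occur as \emph{separate} factors; Theorem \ref{theorem3.9}, applied with the factor $\rmY$ attached to $\calQ$, then replaces each factor by its pushforward and $\pi_*f_i=0$ annihilates the main term. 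The $(\nu_{w,b}-1)$ piece is handled by the generalized von Neumann inequality (\ref{3.g}), proved by PET induction with the functions treated as arbitrary bounded functions; this yields a bound $\ll_L\|\nu_{w,b}-1\|_{V_\calP}+o(1)=o_{w\rightarrow\infty}(1)$. In other words, PET is used only to control the weight, never to extract structure from $f_i$; reorganising your argument along these lines closes the gap.
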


\begin{proof} The expression which we seek to estimate is
$$\calT=\Bigl| \int \avE_{n\le N}\Lamtil_{w,b}(n)\prod_{j=1}^kT^{P_j(Wn)}f_j(x)\,d\mu 
\Bigr|^2.$$
Observe first that, by the invariance of the measure $\mu$ under the action of 
$T$, it follows that for each positive number $M$, one has
$$\calT=\Bigl| \int \avE_{n\le N}\Lamtil_{w,b}(n) \avE_{l\le M}\prod_{j=1}^k
T^{P_j(Wn)+Wl}f_j(x)\,d\mu \Bigr|^2.$$
Consequently, by applying the Cauchy-Schwarz inequality in combination with the 
triangle inequality, one obtains
$$\calT\le \int \Bigl( \avE_{n\le N}\Lamtil_{w,b}(n)\Bigl| \avE_{l\le M}
\prod_{j=1}^kT^{P_j(Wn)+Wl}f_j(x)\Bigr| \Bigr)^2\,d\mu .$$
By Theorem \ref{theorem3.3}, the modified von Mangoldt function 
$\Lamtil_{w,b}(n)$ is pointwise bounded by the pseudorandom majorant 
$\nu_{w,b}(n)$, and hence we may replace the former by the latter in the last 
upper bound for $\calT$. Proceeding first in this way, and then applying the 
Cauchy-Schwarz inequality once again, we deduce that
\begin{align*}
\calT&\le \int \Bigl( \avE_{n\le N}\nu_{w,b}(n)\Bigl| \avE_{l\le M}\prod_{j=1}^k
T^{P_j(Wn)+Wl}f_j(x)\Bigr| \Bigr)^2\,d\mu  \\
&\le \int \Bigl(\avE_{n\le N}\nu_{w,b}(n)\Bigr)\Bigl(\avE_{n\le N}\nu_{w,b}(n)\Bigl|
\avE_{l\le M}\prod_{j=1}^kT^{P_j(Wn)+Wl}f_j(x)\Bigr|^2\Bigr)\,d\mu .
\end{align*}

\par Our goal in the remainder of the proof is to establish that the integral
\begin{equation}\label{3.b}
\calU=\int \avE_{n\le N}\nu_{w,b}(n)\Bigl|\avE_{l\le M}\prod_{j=1}^kT^{P_j(Wn)+Wl}f_j(x)
\Bigr|^2\,d\mu 
\end{equation}
satisfies
\begin{equation}\label{3.c}
\calU=o_{L,w}(1)+o_{L,w\rightarrow \infty}(1).
\end{equation}
Since equation (\ref{3.3}) provides the estimate
$$\avE_{n\le N}\nu_{w,b}(N)=1+o_{w\rightarrow \infty}(1)$$
for the average of the measure $\nu_{w,b}(n)$, it follows from our earlier 
estimate for $\calT$ together with (\ref{3.b}) and (\ref{3.c}) that
$$\calT\le (1+o_{w\rightarrow \infty}(1))\calU=o_{L,w}(1)+o_{L,w\rightarrow \infty}(1),$$
and this suffices to complete the proof of the theorem.\par

We now focus on (\ref{3.b}), expanding the square in the integrand to obtain
\begin{align*}
\calU&=\int \avE_{n\le N}\nu_{w,b}(n)\avE_{l,m\le M}\prod_{j=1}^kT^{P_j(Wn)+Wl}f_j(x)
\prod_{j=1}^kT^{P_j(Wn)+Wm}\fbar_j(x)\,d\mu \\
&=\int \avE_{n\le N}\nu_{w,b}(n)\avE_{l,m\le M}\prod_{j=1}^kT^{P_j(Wn)}(f_j(x)T^{W(m-l)}
\fbar_j(x))\,d\mu .
\end{align*}
Consider the average
$$\int \avE_{n\le N}\avE_{l,m\le M}\prod_{j=1}^kT^{P_j(Wn)}(f_j(x)T^{W(m-l)}\fbar_j(x))
\,d\mu .$$
Take $M$ to be a real number with $M=N^{O(1)}$. In addition, write
$$Q_{2i-1}(n,m,l)=P_i(n)\quad \text{and}\quad Q_{2i}(n,m,l)=P_i(n)+m-l\quad 
(1\le i\le k),$$
and put $\calQ=\{Q_1,\ldots ,Q_{2k}\}$. Let $\rmY$ be the factor supplied by 
Theorem \ref{theorem3.9} associated with $\calQ$. Then if for some $i$ one has 
$\pi_*f_i=0$, then from the latter theorem it follows that the above average is 
$o_{L,w}(1)$.\par

In view of the above discussion, it suffices to show that for any continuous 
bounded functions $g_1,\ldots ,g_k$ with $\|g_i\|_\infty \le L^2$, one has
$$\int \avE_{n\le N} (\nu_{w,b}(n)-1) \prod_{j=1}^kT^{P_j(Wn)}(g_j(x)
T^{W(m-l)}\gbar_j(x))\,d\mu = o_{L,w}(1)+o_{L,w\rightarrow \infty}(1) .$$
We establish the latter by applying PET induction to show that, whenever $a$ 
is a function from $\dbZ$ into $\dbC$ supported in $[N]$, and satisfying 
$a(n)=O_\eps (N^\eps)$ for every $\eps >0$, then one has
\begin{equation}\label{3.g}
\Bigl|\int \avE_{n\le N}a(n)g_0(x)\prod_{j=1}^kT^{P_j(Wn)}g_j(x)\,d\mu \Bigr| \ll_L 
\|a(n)\|_{V_\calP}+o(1).
\end{equation}
The procedure here is very similar to that applied in \cite{TZ2008}, but 
unfortunately it does not fit precisely into the framework of the latter. We 
therefore repeat the process in the present context. The trick is to insert some
 additional averaging by means of a parameter $M$ of order $\sqrt{N}$. An 
important observation, in this context, is that since the polynomials may be 
supposed distinct, with zero constant terms, then the system $\{P_1,\dots ,P_k
\}$ may be reordered in such a way that we obtain a standard system.\par

We first establish the case in which $\calP$ is a standard linear system. 
Thus we suppose that $\calP=\{P_1,\ldots ,P_k\}$ is a standard linear system, 
and prove by induction on $k$ that
$$\Bigl|\int \avE_{n\le N}a(n)g_0(x)\prod_{j=1}^kT^{P_j(Wn)}g_j(x)\,d\mu \Bigr| \ll_L 
\|a(n)\|_{V_{k+1}}+o(1).$$
For $k=1$, we must estimate the absolute value of the integral
$$\calI_1^*=\int \avE_{n\le N}a(n)g_0(x)T^{P_1(Wn)}g_1(x)\,d\mu .$$
We observe first that $\calI_1^*=\calI_1+o_L(1)$, where we have written
$$\calI_1=\avE_{n\le N}\int g_0(x)\avE_{m\le \sqrt{N}}a(n+m)T^{P_1(W(n+m))}g_1(x)\,
d\mu .$$
By the Cauchy-Schwarz inequality, one has
$$\calI_1\ll_L \avE_{n\le N}\Bigl( \int \Bigl| \avE_{m\le \sqrt{N}}a(n+m)
T^{P_1(W(n+m))}g_1(x)\Bigr|^2\,d\mu \Bigr)^{1/2}.$$
By another application of the Cauchy-Schwarz inequality, we obtain the upper 
bound
\begin{align*}\calI_1^2&\ll_L \avE_{n\le N} \int \Bigl| \avE_{m\le \sqrt{N}}a(n+m)
T^{P_1(W(n+m))}g_1(x)\Bigr|^2\,d\mu \\
&=\avE_{n\le N} \int \avE_{m,m'\le \sqrt{N}}a(n+m)\abar(n+m')T^{P_1(W(n+m))}g_1(x)
T^{P_1(W(n+m'))}\gbar_1(x)\,d\mu .
\end{align*}
Consequently, by the triangle inequality,
$$\calI_1^2\ll_L \avE_{m,m'\le \sqrt{N}}\Bigl| \avE_{n\le N}a(n+m)\abar (n+m')\Bigr| 
\Bigl| \int g_1(x)T^{P_1(W(m'-m))}\gbar_1(x)\,d\mu \Bigr| .$$
Thus, on applying Lemma \ref{lemma3.3} and making yet another application of the 
Cauchy-Schwarz inequality, we deduce that
$$\calI_1^4\ll_L \avE_{m,m'\le \sqrt{N}}(\| a(n+m)\abar(n+m')\|_{V_1}+o(1))^2
\ll \|a\|_{V_2}^4+o(1).$$
This confirms the inductive hypothesis when $k=1$.\par

Suppose now that $K>1$, and the inductive hypothesis holds for $k<K$. In this 
case we evaluate the expression
\begin{equation}\label{3.d}
\calI_K^*=\int \avE_{n\le N}a(n)g_0(x)\prod_{j=1}^KT^{P_j(Wn)}g_j(x)\,d\mu .
\end{equation}
As before, we first obtain the relation $\calI_K^*=\calI_K+o_L(1)$, where
$$\calI_K=\avE_{n\le N}\int g_0(x)\avE_{m\le \sqrt{N}}a(n+m)\prod_{j=1}^KT^{P_j(W(n+m))}
g_j(x)\,d\mu .$$
Next, following an application of the Cauchy-Schwarz inequality, we obtain
$$\calI_K\ll_L \avE_{n\le N}\Bigl(\int \Bigl|\avE_{m\le \sqrt{N}}a(n+m)\prod_{j=1}^K
T^{P_j(W(n+m))}g_j(x)\Bigr|^2\,d\mu \Bigr)^{1/2}.$$
A further application of the Cauchy-Schwarz inequality leads to the relation
\begin{align}
\calI_K^2&\ll_L \avE_{n\le N}\int \Bigl|\avE_{m\le \sqrt{N}}a(n+m)\prod_{j=1}^K
T^{P_j(W(n+m))}g_j(x)\Bigr|^2\,d\mu \notag \\
&=\avE_{n\le N}\int \avE_{m,m'\le \sqrt{N}}a(n+m)\abar(n+m')\notag \\
&\ \ \ \ \ \ \ \ \ \times \prod_{j=1}^KT^{P_j(W(n+m))}g_j(x)\prod_{j=1}^K
T^{P_j(W(n+m'))}\gbar_j(x)\,d\mu .\label{3.e}
\end{align}
Next, owing to the invariance of $\mu$ under the action of $T$, we see that
\begin{align*}
\calI_K^2\ll_L \avE_{m,m'\le \sqrt{N}}&\int \avE_{n\le N}a(n+m)\abar(n+m')\\
&\times g_1^{(m,m')}(x)\prod_{j=2}^KT^{(P_j-P_1)(Wn)}g_j^{(m,m')}(x)\,d\mu ,
\end{align*}
where
$$g_j^{(m,m')}(x)=T^{(P_j-P_1)(Wm)}g_j(x)T^{P_j(Wm')-P_1(Wm)}\gbar_j(x)\quad
 (1\le j\le K).$$
As a consequence of the inductive hypothesis, we therefore deduce by means of Lemma 
\ref{lemma3.3} that
$$\calI_K^2\ll_L\avE_{m,m'\le \sqrt{N}}(\|a(n+m)\abar(n+m')\|_{V_K}+o(1))\ll 
\|a(n)\|^2_{V_{K+1}}+o(1).$$
This confirms the inductive hypothesis, for standard linear systems, when 
$k=K$. The inductive hypothesis consequently holds for all standard linear 
systems.\par

We now apply the PET induction scheme so as to reduce the general case to one in
 which the system $\calP$ is standard and linear. We proceed by induction on the
 weight $w(\calP)$ of the polynomial system $\calP$. Suppose that the desired 
conclusion holds for every standard polynomial system $\calP$ with weight 
$w(\calP)<w$. Since we have already established the desired conclusion for every
 standard linear system, we may suppose that $\calP$ is a standard polynomial 
system of weight $w(\calP)=w$ that is non-linear. As in the linear case, we 
begin by inserting some additional averaging over a variable $m$ running over an
 interval of length $\sqrt{N}$. Thus we evaluate the expression
$$\calI_\calP^*=\int \avE_{n\le N}a(n)g_0(x)\prod_{j=1}^kT^{P_j(Wn)}g_j(x)\,d\mu .$$
The argument leading from (\ref{3.d}) to (\ref{3.e}) may now be applied, without
 modification, to show that $\calI_\calP^*=\calI_\calP+o_L(1)$, where
\begin{align*}
\calI_\calP^2&\ll_L \avE_{n\le N}\int \avE_{m,m'\le \sqrt{N}}a(n+m)\abar(n+m')\\
&\ \ \ \ \ \ \ \ \ \times \prod_{j=1}^kT^{P_j(W(n+m))}g_j(x)\prod_{j=1}^kT^{P_j(W(n+m'))}
\gbar_j(x)\,d\mu .
\end{align*}
Next, applying the invariance of $\mu$ under the action of $T$, we find that
\begin{align}
\calI_\calP^2\ll_L \avE_{n\le N}\int \avE_{m,m'\le \sqrt{N}}&a(n+m)\abar(n+m')
g_1(x)\prod_{j=2}^k T^{(P_j-P_1)(W(n+m))}g_j(x)\notag \\
&\times \prod_{j=1}^k T^{P_j(W(n+m'))-P_1(W(n+m))}\gbar_j(x)\,d\mu .\label{3.f}
\end{align}

\par When $q$ is an integer, define the set of polynomials $\calR'_0(q,q')$ by
$$\calR'_0(q,q')=\{ P_i(W(n+q'))-P_1(W(n+q))\}_{1\le i\le k}.$$
Consider the set of polynomials $\calR'=\calR'_0(m,m)\cup \calR'_0(m,m')$, and let 
$$\calR=\{R_1,\ldots ,R_J\}$$
denote the system obtained from $\calR'$ by removing the polynomials in $\calR'$
 of degree zero with respect to $n$. Then by Lemma \ref{lemmaB.1}, the set of 
polynomials $\calR$ has lower weight with respect to $n$ than the set $\calP$, 
so that $w(\calR)<w$. Moreover, the estimate (\ref{3.f}) takes the shape
$$\calI_\calP^2\ll_L \avE_{n\le N}\int h_0(x)\avE_{m,m'\le \sqrt{N}}a(n+m)\abar(n+m')
\prod_{j=1}^J T^{R_j(m,m',n)}h_j(x)\,d\mu ,$$
in which $h_j$ satisfies $\|h_j\|_\infty =O_L(1)$ $(0\le j\le J)$. Therefore, by 
the inductive hypothesis in combination with Lemma \ref{lemma3.3}, we may conclude 
that
$$\calI_\calP^2\ll_L\avE_{m,m'\le \sqrt{N}}(\|a(n+m)\abar(n+m')\|_{V_\calR}+o(1))\ll 
\|a(n)\|^2_{V_\calP}+o(1).$$ 
This confirms the inductive hypothesis for all standard polynomial systems of 
weight $w$, and hence the inductive hypothesis (\ref{3.g}) has now been 
established for all polynomial systems.
\end{proof}

\begin{corollary}\label{corollary3.11} Provided that the lower bound $(3.5)$ 
holds in the special case wherein $\rmX$ is an inverse limit of nilsystems of 
bounded step, then it holds also without restriction.
\end{corollary}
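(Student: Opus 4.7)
The plan is to use Proposition~\ref{proposition3.10} to construct a characteristic factor $\rmY$ of $\rmX$---an inverse limit of nilsystems of bounded step---and to replace $1_A$ by its projection onto $\rmY$, up to negligible error. By Lemma~\ref{lemma3.7} it suffices to establish
\[
\avE_{n\le N}\Lamtil_{w,b}(n)\int 1_A \prod_{j=1}^k T^{P_j(Wn)}1_A\, d\mu \gg_\del 1+o_w(1)+o_{w\rightarrow \infty}(1).
\]
First apply Proposition~\ref{proposition3.10} with $f_1=\ldots=f_k=1_A$ to obtain a factor $\rmY=(Y,\calD,\nu,S)$, an inverse limit of $d(\calP)$-step nilsystems, with factor map $\pi\colon \rmX\rightarrow \rmY$. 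Put $h:= \pi^*\pi_*1_A$ and $g:= 1_A-h$, so that $0\le h\le 1$ and $\pi_* g=0$.

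Substituting $1_A=h+g$ in each of the $k+1$ copies of $1_A$ and expanding multilinearly yields the main term $h\prod_{j=1}^k T^{P_j(Wn)}h$ together with $2^{k+1}-1$ error terms, each of the form $\int F_0\prod_{j=1}^k T^{P_j(Wn)}F_j\,d\mu$ with $F_0,F_j\in\{h,g\}$ and at least one of them equal to $g$. If $g$ occurs in any shifted slot $F_\ell$ with $\ell\ge 1$, the $\Lamtil_{w,b}$-average of the corresponding error is $o_w(1)+o_{w\rightarrow\infty}(1)$ by the key PET-induction estimate (\ref{3.g}) established within the proof of Proposition~\ref{proposition3.10}, which is explicitly formulated with an arbitrary bounded outside factor $g_0$; the $\Lamtil_{w,b}$-weighted bound follows exactly as in that proof, using the pointwise inequality $\Lamtil_{w,b}\le \nu_{w,b}$ together with $\|\nu_{w,b}-1\|_{V_\calP}=o_{w\rightarrow\infty}(1)$. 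The unique remaining error term has $F_0=g$ and $F_j=h$ for every $j\ge 1$; here $\prod_{j=1}^k T^{P_j(Wn)}h$ is $\pi^*\calD$-measurable (since $h$ is a pullback and $\pi$ intertwines $T$ with $S$), and therefore
\[
\int g\prod_{j=1}^k T^{P_j(Wn)}h\, d\mu = \int_Y \pi_*g \cdot \prod_{j=1}^k S^{P_j(Wn)}\pi_*1_A\, d\nu = 0.
\]

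Having absorbed every error, the problem reduces to bounding from below the main term
\[
\avE_{n\le N}\Lamtil_{w,b}(n)\int_Y f \prod_{j=1}^k S^{P_j(Wn)}f\, d\nu,
\]
where $f:= \pi_* 1_A$ is a non-negative function on $\rmY$ with $0\le f\le 1$ and $\int_Y f\, d\nu=\mu(A)>\del$. Since $\rmY$ is an inverse limit of nilsystems of bounded step, the hypothesised form of (\ref{3.5a})---interpreted for non-negative bounded functions of mean exceeding $\del$ rather than merely characteristic functions, which is the level of generality at which Theorem~\ref{theorem3.8} and Lemma~\ref{lemma3.7} already operate---delivers the required lower bound $\gg_\del 1+o_w(1)+o_{w\rightarrow \infty}(1)$. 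The principal technical subtlety is the presence of the outside factor in the error integrals; this is what forces us to appeal to (\ref{3.g}) rather than to the bare statement of Proposition~\ref{proposition3.10}, but the appeal is immediate once (\ref{3.g}) is in hand.
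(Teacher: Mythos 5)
Your proof is correct and follows essentially the same route as the paper: decompose $1_A=\pi^*\pi_*1_A+(1_A-\pi^*\pi_*1_A)$ with $\rmY$ the factor from Proposition~\ref{proposition3.10}, kill the cross terms containing the orthogonal piece, and pass to $\pi_*1_A$ on $\rmY$, invoking the hypothesis at the level of bounded non-negative functions of mean exceeding $\del$. You in fact supply details the paper's one-line proof suppresses --- notably that the term with $g$ only in the unshifted slot vanishes by $\rmY$-measurability of the product, and that the remaining cross terms need the proof of Proposition~\ref{proposition3.10} (with its estimate (\ref{3.g}) together with Theorem~\ref{theorem3.9}) run with the extra bounded outside factor, since the bare statement of the proposition has no such factor --- and these details are handled correctly.
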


\begin{proof} Let $\rmY$ be the factor supplied by Proposition 
\ref{proposition3.10}, and let $\pi:\rmX \rightarrow \rmY$ be the associated 
projection. Decompose the characteristic function $1_A$ by means of the identity
 $1_A=\pi^*\pi_*1_A+(1_A-\pi^*\pi_*1_A)$. Then one sees that
\begin{align*}
\Bigl| \int \avE_{n\le N}\Lamtil_{w,b}(n)1_A(x)&\prod_{j=1}^k(T^{P_j(Wn)}1_A(x))\,
d\mu \Bigr|\\
=&\,\Bigl|\int \avE_{n\le N}\Lamtil_{w,b}(n)\pi_*1_A(x)\prod_{j=1}^k(T^{P_j(Wn)}
\pi_*1_A(x))\,d\pi_*\mu \Bigr|\\
&\,+o_w(1)+o_{w\rightarrow \infty}(1).
\end{align*}
\end{proof}

We make one further reduction, from an inverse limit of nilsystems to a 
nilsystem proper, and replace $1_A$ with a Lipschitz continuous function. This 
we accomplish by means of a standard approximation argument, obtaining a 
conclusion independent of $w$.

\begin{lemma}\label{lemma3.12} Suppose that $\rmX$ is an ergodic measure 
preserving system. Let $f_1,\ldots ,f_k\in L^\infty(X)$, and suppose that 
$\|f_j\|_\infty \le 1$. If $g\in L^\infty(X)$ satisfies the condition 
$\|f_1-g\|_2<\eps$, then
$$\Bigl| \int \avE_{n\le N}\Lamtil_{b,w}(n)T^{P_1(Wn)}(f_1-g)(x)\prod_{j=2}^k
T^{P_j(Wn)}f_j(x)\,d\mu \Bigr| =O_w(\eps).$$
\end{lemma}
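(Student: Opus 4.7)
The statement asks for an $L^2$-type stability estimate: perturbing $f_1$ by $g$ in $L^2$ changes the $\Lamtil_{b,w}$-weighted polynomial average by $O_w(\eps)$. The proof will be a short sequence of elementary bounds, with the only non-trivial input being the bound $\avE_{n\le N}\Lamtil_{b,w}(n)=O(1)$ coming from Theorem~\ref{theorem3.3}.

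Writing $h=f_1-g$, so that $\|h\|_2<\eps$, my plan is to pull the absolute value inside the integral and bound the contributions of $T^{P_j(Wn)}f_j$ for $j\ge 2$ by their $L^\infty$ norms. Since $\Lamtil_{b,w}(n)\ge 0$ and $\|f_j\|_\infty\le 1$, this yields
$$\Bigl|\int \avE_{n\le N}\Lamtil_{b,w}(n)T^{P_1(Wn)}h(x)\prod_{j=2}^k T^{P_j(Wn)}f_j(x)\,d\mu \Bigr| \le \avE_{n\le N}\Lamtil_{b,w}(n)\int |T^{P_1(Wn)}h(x)|\,d\mu.$$

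Next I would apply the $T$-invariance of $\mu$ to eliminate the shift, giving $\int|T^{P_1(Wn)}h|\,d\mu=\|h\|_1$, which is independent of $n$. Since $\mu$ is a probability measure, the Cauchy--Schwarz inequality yields $\|h\|_1\le \|h\|_2<\eps$, whence the right-hand side above is at most $\eps\cdot\avE_{n\le N}\Lamtil_{b,w}(n)$.

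It remains to control $\avE_{n\le N}\Lamtil_{b,w}(n)$, and this is where Theorem~\ref{theorem3.3} enters: by the pointwise inequality $0\le\Lamtil_{b,w}\le\nu_{w,b}$ together with the total-mass estimate $(\ref{3.3})$, we have
$$\avE_{n\le N}\Lamtil_{b,w}(n)\le \avE_{n\le N}\nu_{w,b}(n)=1+o_{w\rightarrow\infty}(1),$$
which is $O(1)$, and in particular $O_w(1)$. Combining everything produces a bound of the form $O_w(\eps)$, as required. There is essentially no obstacle here beyond being careful about the order in which absolute values, $T$-invariance, and the pseudorandom majorant are invoked; the genuinely difficult work has already been done in Proposition~\ref{proposition3.10} and Theorem~\ref{theorem3.3}, both of which are needed only implicitly through the bound on $\avE\Lamtil_{b,w}$.
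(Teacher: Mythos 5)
Your proposal is correct and follows essentially the same route as the paper: pull the (nonnegative) weight $\Lamtil_{b,w}(n)$ outside via the triangle inequality, bound the inner integral by $\|f_1-g\|_2$ using $T$-invariance and the $L^\infty$ bounds on $f_2,\ldots,f_k$ (the paper does this with Cauchy--Schwarz on the integral, you with a pointwise bound followed by $\|h\|_1\le\|h\|_2$, which is equivalent), and finish with $\avE_{n\le N}\Lamtil_{b,w}(n)=O_w(1)$. The only cosmetic difference is that you justify this last bound via the majorant $\nu_{w,b}$ from Theorem~\ref{theorem3.3}, which is a perfectly valid way to obtain it.
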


\begin{proof} By applying the triangle inequality in combination with the 
Cauchy-Schwarz inequality, we obtain
\begin{align*}
\Bigl| \int \avE_{n\le N}\Lamtil_{b,w}(n)&T^{P_1(Wn)}(f_1-g)(x)\prod_{j=2}^k
T^{P_j(Wn)}f_j(x)\,d\mu \Bigr| \\
&\le \avE_{n\le N}\Lamtil_{b,w}(n)\Bigl| \int T^{P_1(Wn)}(f_1-g)(x)
\prod_{j=2}^kT^{P_j(Wn)}f_j(x)\,d\mu \Bigr| \\
&\le \avE_{n\le N}\Lamtil_{b,w}(n)\| f_1-g\|_2\prod_{j=2}^k\|f_j\|_\infty
<\eps (1+o_w(1)).
\end{align*}
\end{proof}

We are consequently able to conclude as follows.

\begin{corollary}\label{corollary3.13}
Provided that the lower bound
$$\avE_{n\le N}\int \Lamtil_{w,b}(n)g(x)\prod_{j=1}^k(T^{P_j(Wn)}g(x))
\,d\mu \gg_\del 1+o_w(1)+o_{w\rightarrow \infty}(1)$$
holds in the special case wherein $\rmX$ is a nilsystem, and $g$ is a 
Lipschitz continuous function satisfying the hypotheses of  Theorem 
$\ref{theorem3.8}$, then it holds also without restriction.
\end{corollary}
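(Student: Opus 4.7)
The strategy is twofold: first pass from an inverse limit of nilsystems down to a single nilsystem, then replace the characteristic function by a Lipschitz approximant. By Corollary \ref{corollary3.11}, we may assume that $\rmX$ is an inverse limit of $d(\calP)$-step nilsystems $\rmX_i=(G_i/\Gam_i,\calB_i,\mu_i,T_i)$, with factor maps $\pi_i:\rmX\to \rmX_i$. Fix $\eps>0$. Since $\calB=\bigvee_i \pi_i^{-1}(\calB_i)$ up to null sets, the martingale convergence theorem supplies an index $i$ and a function $f\in L^2(\rmX_i)$ with $\|1_A-f\|_2<\eps/2$. On the nilmanifold $G_i/\Gam_i$, Lipschitz continuous functions are dense in $L^2$, so we find a Lipschitz $g$ with $\|f-g\|_2<\eps/2$. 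Replacing $g$ by its truncation $\max(0,\min(1,g))$, which is still Lipschitz with the same (or smaller) constant and only decreases the $L^2$-distance to $1_A$, we arrange $0\le g\le 1$ and $\|1_A-g\|_2<\eps$. Choosing $\eps<\mu(A)-\del$, we further ensure that $\int g\,d\mu \ge \mu(A)-\eps>\del$, so $g$ satisfies the hypotheses of Theorem \ref{theorem3.8}.

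Next I replace each of the $k+1$ occurrences of $1_A$ in the average (\ref{3.6}) by $g$, one slot at a time. For each of the $k$ translated factors $T^{P_j(Wn)}1_A$, Lemma \ref{lemma3.12} (after relabelling so that $P_j$ plays the role of $P_1$) gives a replacement error of magnitude $O_w(\eps)$. For the untranslated factor $1_A(x)$, a direct application of the Cauchy--Schwarz inequality together with the uniform $L^\infty$ bound on the remaining product, combined with the estimate $\avE_{n\le N}\Lamtil_{w,b}(n)=O_w(1)$, yields an error of the same magnitude. Summing the $k+1$ replacements produces a total error of size $O_{k,w}(\eps)$.

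By the hypothesis of the corollary, the resulting expression, in which every $1_A$ has been replaced by the Lipschitz function $g$ on the nilsystem $\rmX_i$, is bounded below by $c(\del)+o_w(1)+o_{w\to\infty}(1)$. Fixing $w$ large and then choosing $\eps=\eps(w,\del)$ small enough that the accumulated error $O_{k,w}(\eps)$ is less than $\tfrac{1}{2}c(\del)$, we recover the desired lower bound for the original average involving $1_A$, which completes the reduction.

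The only delicate point is the order of quantifiers: because the constant implicit in Lemma \ref{lemma3.12} depends on $w$, the approximant $g$ must be chosen after $w$ has been fixed. This poses no real obstacle, however, since the assumed special-case lower bound is uniform over all admissible Lipschitz functions on any $d(\calP)$-step nilsystem, so the sequence of choices (first $w$ large, then $g$ close to $1_A$, then $N$ large) carries the estimate through without circularity.
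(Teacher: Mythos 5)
Your argument is correct and is essentially the ``standard approximation argument'' the paper invokes: Corollary \ref{corollary3.11} to pass to the inverse limit, approximation of $\pi_*1_A$ (which is what one actually approximates, rather than $1_A$ itself) by a truncated Lipschitz function on a single nilsystem factor, and a telescoping replacement via Lemma \ref{lemma3.12}. Your remark on the order of quantifiers --- fixing $w$ before choosing $\eps$, which is legitimate because the implied constant in the assumed special-case bound depends only on $\del$ --- correctly resolves the one genuinely delicate point.
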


Now let $X=(G/\Gam,\calB,\mu,T)$ be an ergodic nilsystem, with the 
transformation $T$ being given by $a\in G$, and write $G^0$ for the identity 
component of $G$. If $X$ is disconnected, then $X^0=G^0\Gamma/\Gamma \cong 
G^0/(\Gamma \cap G^0)$ is a connected component of $X$. Since $X$ is compact,
one finds that $X$ is a disjoint union of finitely many translations of $X^0$,
 say $X=\cup_{i=1}^J a^iX^0$, and the nilsystem $X_0=(X^0, a^J)$ has no finite
 factors. We may now assume further, without loss of generality, that the 
system $X_0$ is of the form  $(L/\Lambda,b)$, where $L$ is connected and 
simply connected (see the discussion at the beginning of section 1 of Leibman 
\cite{Lei2010}). In $L$ there is an element $c$ with $c^J=b$. Since $a^i$ 
induces an isomorphism between $X_0$ and $X_i=(a^iX^0,a^J)$, the same holds 
for $X_i$.\par

For any Lipschitz continuous function $f$, and any fixed $x\in X$, the sequence
 $g_{x,w}(n)$, defined by
 \begin{equation}\label{3.hash}
g_{x,w}(n)=f(a^{P_1(Wn)}x)\ldots f(a^{P_k(Wn)}x),
\end{equation}
is a polynomial nilsequence. Note that for a fixed integer $i$ with 
$1\le i\le J$, the set
$$\{(a^{P_1(Wn)}x,\ldots ,a^{P_k(Wn)}x)\}_{n \equiv i\mmod{J}}$$
is contained in a fixed connected component of $(G/\Gam)^k$. Furthermore, if 
$n=mJ+i$, then
$$a^{P_l(Wn)}x=a^{JP'_{l}(Wm)+q}x=b^{P'_{l}(Wm)}a^qx=c^{JP'_{l}(Wm)+q}
c^{-q}a^qx=c^{P_{l}(Wn)}c^{-q}a^q x,$$
where $P'_l$ and $q$ may depend on $i,W,l,J$. Thus $f(a^{P_l(Wn)}x)$ can be 
viewed as a polynomial nilsequence on the  nilmanifold $L/\Lambda$.\par

Now consider
$$g_{x,w,i}(n)=1_{n\equiv i\mmod{J}}f(a^{P_1(Wn)}x)\ldots f(a^{P_k(Wn)}x).$$
The function $1_{n\equiv i\mmod{J}}$ is a $1$-step nilsequence on the torus 
$\dbT=\dbR/\dbZ$. It is defined by the polynomial $g:\dbZ \to \dbR$ given by 
$g(n)=n/J$, and a function $F:\dbT \to [0,1]$ which is Lipschitz, supported 
on a $1/(10J)$ neighborhood of $i/J$, and for which $F(i/J)=1$. Thus 
$g_{x,w,i}$ is a polynomial nilsequence on the product $\dbT$ and a 
connected component of $(G/\Gam)^k$, with new Lipschitz constant that may 
depend also on $J$.\par 

The upshot of the above discussion is that 
$$g_{x,w}(n)=\sum_{i=1}^J g_{x,w,i}(n),$$
and thus $g_{x,w}(n)$ can be viewed as a polynomial nilsequence on a 
nilmanifold $G/\Gamma$, where the group $G$ is connected and simply connected.

\begin{proposition}\label{proposition3.14}
With the notation  and assumptions in the preamble, one has
\begin{equation}\label{3.5}
\Bigl| \avE_{n\le N}(\Lamtil_{w,b}(n)-\eta \onetil(n))g_{x,w}(n)\Bigr|=
o_{w,\|f\|_{\rm {Lip}}}(1).
\end{equation}
\end{proposition}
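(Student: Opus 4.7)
The plan is to adapt the Green-Tao argument of [GT2009] for correlations between von Mangoldt-type weights and polynomial nilsequences, supplemented by the M\"obius orthogonality theorem of Green-Tao [GT2010].

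First, using Lemma \ref{lemma3.6} together with the standard bound $\sum_{k\ge 2}\sum_{p^k\le N}\log p = O(\sqrt{N}\log N)$ for the prime-power contribution to $\Lam$, the proposition reduces to the estimate
$$\avE_{n\le N/2}\Bigl(\frac{\phi(W)}{W}\Lam(Wn+b) - 1\Bigr) g_{x,w}(n) = o_{w,\|f\|_{\rm Lip}}(1),$$
where $\Lam$ denotes the classical von Mangoldt function. Next, following the smooth decomposition of [GT2009, \S5], I would split the identity function $\chi(x)=x$ as $\chi(x) = \chi^\sharp(x)+\chi^\flat(x)$, with $\chi^\sharp$ supported on the small-logarithm range $[0,(1-\eps)\log N]$ and $\chi^\flat$ on the complementary range. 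This produces $\Lam = \Lam^\sharp+\Lam^\flat$, where
$$\Lam^\sharp(m) = -\sum_{d\mid m}\mu(d)\chi^\sharp(\log(m/d)),\qquad \Lam^\flat(m) = -\sum_{d\mid m}\mu(d)\chi^\flat(\log(m/d)).$$

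For the $\Lam^\sharp$ piece, only divisors $d\le N^{1-\eps}$ contribute, and the condition $d\mid Wn+b$ picks out an arithmetic progression. Since $g_{x,w}$ is a polynomial nilsequence on the connected, simply connected nilmanifold $G/\Gam$ constructed in the preamble, the quantitative equidistribution theorem for polynomial sequences on nilmanifolds (Green-Tao [GT2010]) controls the average of $g_{x,w}$ along these progressions; combining this with the $W$-trick, the sum over $d$ telescopes to cancel the constant $1$, leaving only an $o_w(1)$ error.

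For the $\Lam^\flat$ piece, I would interchange the $d$-summation with the $n$-average and rewrite the resulting inner sum, after resolving the congruence $d\mid Wn+b$ as a shift on $G/\Gam$, as a M\"obius-weighted average of a polynomial nilsequence. Theorem 1.1 of [GT2010] then supplies
$$\Bigl|\avE_{n\le N}\mu(n)F(h(n)\Gam)\Bigr| \ll_{A,G/\Gam,\|F\|_{\rm Lip}}(\log N)^{-A},$$
uniformly over polynomial sequences $h:\dbZ\to G$, with an implied constant depending only on the step and dimension of the nilmanifold and on $A$. Since the effective divisor range for $\chi^\flat$ contributes at most $N^{o(1)}$ summands, choosing $A$ sufficiently large yields the required decay.

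The principal obstacle will be uniformity of constants as the divisor sums are expanded: one must verify that the Lipschitz norms of the shifted and restricted nilsequences produced at each stage remain polynomially controlled in $d$, so that the power-of-log saving from [GT2010, Theorem 1.1] dominates the summation over $d$. Fortunately, the nilmanifold $G/\Gam$, its step (bounded by $d(\calP)$ from Proposition \ref{proposition3.10}), and its dimension are all independent of $w$ and of the specific polynomial tuple, so the constants supplied by [GT2010] are uniform in the parameters that matter, and only the harmless factor $\phi(W)/W$ retains $w$-dependence.
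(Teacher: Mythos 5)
Your overall architecture (reduce to the classical von Mangoldt function, split $\Lam=\Lam^\sharp+\Lam^\flat$, handle $\Lam^\flat$ by M\"obius--nilsequence orthogonality from \cite{GT2010}) matches the paper, and your $\Lam^\flat$ treatment is essentially correct once one organizes the double sum as an average over the cofactor $m=(Wn+b)/d$ with the M\"obius weight on $d$ (your remark that the divisor range contributes ``$N^{o(1)}$ summands'' is not the right accounting --- the point is rather that the per-$m$ bound $(\log R)^{-A}$ passes through the normalized average over $m$ and beats the single prefactor $\log R$, so $A>1$ suffices). However, there is a genuine gap in your treatment of the $\Lam^\sharp$ piece. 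You propose to expand the divisor sum, use equidistribution of the polynomial nilsequence along each progression $d\mid Wn+b$, and let the sum over $d$ ``telescope to cancel the constant $1$.'' This does not close: the equidistribution error $E_d$ in $\avE_{n\le N}1_{d\mid Wn+b}\,g_{x,w}(n)=\tfrac1d\avE_{n\le N}g_{x,w}(n)+E_d$ must be summed, with the prefactor $\log R$, over all $d$ up to the sieve level, and neither a trivial bound $|E_d|\ll 1/d$ nor any per-$d$ power-of-log saving is remotely summable; moreover $g_{x,w}$ is a fixed orbit and need not equidistribute at all, so each $d$ would require the full factorization machinery. This term-by-term strategy is precisely what the Goldston--Y{\i}ld{\i}r{\i}m/Green--Tao correlation estimates are designed to avoid.

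The missing ingredient is the paper's Lemma \ref{lemma3.15}: decompose the nilsequence as $F=F_1+F_2$ with $F_1$ an averaged nilsequence of bounded dual Gowers norm $\|F_1\|_{U^{r(\calP)}[N]_*}$ and $\|F_2\|_\infty=O(\eps)$, then estimate the $\Lam^\sharp$ contribution by the duality pairing
$$\Bigl\|\bigl(\tfrac{\phi(W)}{W}\Lam^\sharp(Wn+b)-1\bigr)\onetil(n)\Bigr\|_{U^{r(\calP)}[N]}\,\|F_{x,1}(Wn)\|_{U^{r(\calP)}[N]_*},$$
using the $W$-tricked Gowers-uniformity of the truncated divisor sum, and absorb $F_2$ via its $L^\infty$ bound against the $L^1$ mass of the weight. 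Note also that this forces the truncation to occur at the sieve level $R=N^\eta$ with $\eta$ small (as in the paper), not at $N^{1-\eps}$ as you propose: the Gowers-norm estimate for $\tfrac{\phi(W)}{W}\Lam^\sharp(W\cdot+b)-1$ requires the sieve level to be a sufficiently small power of $N$.
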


This proposition is essentially the polynomial version of Proposition 11.3 of 
\cite{GT2009}. We sketch a proof below. One of the main ingredients is the 
following lemma, which is the polynomial version of Proposition 11.2 of 
\cite{GT2009}. Since the proof is essentially the same, we omit it, though we
 note that one could also prove this lemma using Proposition 11.2 of 
 \cite{GT2009} and the fact that a polynomial nilsequence can be viewed as a 
 linear nilsequence on some nilmanifold of larger nilpotence degree. This is 
 shown in Leibman \cite{Lei2005b} in the context of continuous nilsequences. 
 All that would be required is to verify that Leibman's proof is valid for 
 Lipschitz nilsequences, and is independent of $W$.

\begin{lemma}\label{lemma3.15} Let $F(n)$ be a polynomial nilsequence in 
$G/\Gam$ defined by polynomials from the set $\calP=\{P_1,\dots ,P_k\}$, as in 
$(\ref{3.hash})$, and suppose that $F$ has Lipschitz constant $M$. Let $\eps$ 
be a positive number, and suppose that $N\ge 1$. Then there exist integers 
$r(\calP)$ and $t(\calP)$, and a decomposition $F(n)=F_1(n)+F_2(n)$, where 
$F_1$ is an averaged nilsequence on $(G/\Gam)^{t(\calP)}$ with Lipschitz 
constant $O_{M,\eps,G/\Gam}(1)$, and satisfying
$$\|F_1(n)\|_{U^{r(\calP)}[N]_*}\ll_{M,\eps,G/\Gam }1,$$
and where $\|F_2\|_\infty =O(\eps)$.
\end{lemma}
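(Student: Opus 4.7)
The plan is to follow the second strategy indicated in the paragraph preceding the lemma: reduce to the linear case via Leibman's device and then quote Proposition 11.2 of \cite{GT2009}. Concretely, using \cite{Lei2005b} one may realise the polynomial nilsequence
\[
F(n)=f(a^{P_1(Wn)}x)\cdots f(a^{P_k(Wn)}x)
\]
as a linear nilsequence on an enlarged nilmanifold $\tH/\tilde{\Gam}$. The enlarged group $\tH$ is built from $G^k$ together with additional factors that absorb the polynomial dependence on $n$, and its step is a function only of $\deg P_1,\ldots,\deg P_k$ and the step of $G/\Gam$. Thus one writes $F(n)=\tF(\tb^n\tx)$ for some $\tb\in\tH$, $\tx\in\tH/\tilde{\Gam}$, and a Lipschitz function $\tF$ on $\tH/\tilde{\Gam}$.

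With this in hand, the desired decomposition is obtained by applying Proposition 11.2 of \cite{GT2009} to the linear nilsequence $\tF(\tb^n\tx)$. That proposition supplies integers $r=r(\tH/\tilde{\Gam})$ and $t=t(\tH/\tilde{\Gam})$, together with a splitting $\tF(\tb^n\tx)=F_1(n)+F_2(n)$ where $F_1$ is an averaged nilsequence on $(\tH/\tilde{\Gam})^t$ with bounded Gowers--type norm and $\|F_2\|_\infty=O(\eps)$. Since $\tH/\tilde{\Gam}$ is built canonically out of $G/\Gam$ and $\calP$, one obtains $r=r(\calP)$ and $t=t(\calP)$ of the required type; moreover, by realising $\tH/\tilde{\Gam}$ inside a finite product of copies of $G/\Gam$ with auxiliary tori, we can view $F_1$ as being averaged on $(G/\Gam)^{t(\calP)}$ after possibly enlarging $t(\calP)$.

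The one step which requires checking (and which is the main obstacle to simply quoting \cite{GT2009}) is uniformity in $W$. The substitution $n\mapsto Wn$ produces polynomials of the same degree as the $P_j$, but with enormously $W$-dependent coefficients; in Leibman's construction these coefficients enter as group elements in the new generator $\tb$, so they do \emph{not} affect the step, the dimension, the generators of $\tilde{\Gam}$, or the Lipschitz constant of $\tF$. Consequently the bounds produced by Proposition 11.2 of \cite{GT2009} depend only on $M$, $\eps$, and $G/\Gam$, and the construction goes through without any appeal to structure dependent on $W$. The remaining work consists of verifying, as noted in the excerpt, that the arguments of Proposition 11.2 of \cite{GT2009}, which are stated for smooth linear nilsequences, extend verbatim to the Lipschitz setting; this is routine, since the only use of smoothness there is to approximate in $C^0$, which the Lipschitz assumption already provides.
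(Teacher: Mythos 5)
Your argument is correct in substance, but it is worth noting that it carries out the \emph{second} of the two routes that the paper itself indicates: the paper declares its primary proof to be ``essentially the same'' as that of Proposition 11.2 of \cite{GT2009}, i.e.\ a direct rerun of the vertical-decomposition argument in the polynomial setting, and only mentions in passing the alternative of first linearising via Leibman \cite{Lei2005b} and then quoting Proposition 11.2 as a black box. The direct adaptation is what produces the conclusion exactly as stated, with $F_1$ an averaged nilsequence on a power of the \emph{original} nilmanifold $(G/\Gam)^{t(\calP)}$; your route instead lands $F_1$ on powers of the enlarged nilmanifold $\tH/\widetilde{\Gam}$, and your claim that $\tH/\widetilde{\Gam}$ can be ``realised inside a finite product of copies of $G/\Gam$ with auxiliary tori'' is not justified and is probably not literally true (Leibman's enlarged group is built from polynomial sequences into $G$, not from $G$ itself together with tori). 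This mismatch is cosmetic for the application --- all that is used downstream is the bound on $\|F_1\|_{U^{r(\calP)}[N]_*}$, the Lipschitz control, and $\|F_2\|_\infty=O(\eps)$ --- but you should either restate the conclusion with $(\tH/\widetilde{\Gam})^{t(\calP)}$ in place of $(G/\Gam)^{t(\calP)}$ or drop the embedding claim. On the genuinely delicate point, uniformity in $W$, your analysis is the right one and matches what the paper says must be checked: the $W$-dependent coefficients of the $P_j(Wn)$ are absorbed into the group element $\tb$ generating the linear sequence, leaving the step, dimension, lattice and Lipschitz constant of $\tF$ unchanged, so the constants emerging from Proposition 11.2 depend only on $M$, $\eps$ and $G/\Gam$. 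What your route buys is that one never has to reprove the Green--Tao decomposition for polynomial orbits; what it costs is the extra verification that Leibman's linearisation is compatible with Lipschitz (rather than smooth or merely continuous) data and the slight loss of control over which nilmanifold hosts $F_1$.
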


We now replace $\Lamtil_{w,b}(n)$ by
$$\Lam_{w,b}(n)=\eta \frac{\phi(W)}{W}\Lam(Wn+b)\onetil (n),$$
where $\Lam$ is the classical von Mangoldt function. This is permissible for 
averaging purposes in view of the fact that the difference is negligible on 
average. To this end, we follow section 12 of \cite{GT2009}. Define the function
 $\chi:\dbR^+\rightarrow \dbR^+$ by putting $\chi(x)=x$. We decompose $\chi$ 
via the identity $\chi=\chi^\sharp+\chi^\flat$, where $\chi^\sharp$ is a smooth 
function vanishing for $|x|\ge 1$, and $\chi^\flat$ a smooth function vanishing 
for $|x|\le \frac{1}{2}$. This induces a decomposition $\Lam=\Lam^\sharp+
\Lam^\flat$, with 
$$\Lam^\sharp(n)=-\log R\sum_{d|n}\mu(d)\chi^\sharp \Bigl( \frac{\log d}{\log R}
\Bigr)$$
and
$$\Lam^\flat(n)=-\log R\sum_{d|n}\mu(d)\chi^\flat \Bigl( \frac{\log d}{\log R}
\Bigr).$$

\par Recall the definition of $g_{x,w}(n)$, and define $F_x$ by means of the 
relation $F_x(Wn)=g_{x,w}(n)$. Let $\eps>0$ be sufficiently small, and apply 
Lemma \ref{lemma3.15} to decompose $F_x(Wn)$ in the form $F_{x,1}(Wn)+
F_{x,2}(Wn)$, with conditions silently implied by the suffices $1$ and $2$. 
From here, following the argument of \cite{GT2009} in order to accommodate 
the harmless additional factor $\onetil (n)$, one finds that
\begin{align*}
&\Bigl|\avE_{n\le N}\Bigl(\frac{\phi(W)}{W}\Lam^\sharp (Wn+b)-1\Bigr)
\onetil (n)F_{x,1}(Wn)\Bigr|\\
&\ \ \ \ \ \ \ \le \Bigl\| \Bigl(\frac{\phi(W)}{W}\Lam^\sharp (Wn+b)-1\Bigr)
\onetil (n)\Bigr\|_{U^{r(\calP)}[N]}\|F_{x,1}(Wn)\|_{U^{r(\calP)}[N]_*}\\
&\ \ \ \ \ \ \ =o_{w,\eps ,\|f\|_{\rm Lip}}(1).
\end{align*}   
On the other hand, in view of the upper bound $\|F_{x,2}(Wn)\|_\infty <\eps$, 
we see that
\begin{align*}
&\Bigl|\avE_{n\le N}\Bigl(\frac{\phi(W)}{W}\Lam^\sharp (Wn+b)-1\Bigr)\onetil 
(n)F_{x,2}(Wn)\Bigr|\\
&\ \ \ \ \ \ \ \ll \eps \avE_{n\le N}\Bigl| \Bigl(\frac{\phi(W)}{W}\Lam^\sharp 
(Wn+b)-1\Bigr)\onetil (n)\Bigr|\ll_w \eps .
\end{align*}
Taking $\eps$ now to be a positive function of $N$ decreasing to zero 
sufficiently slowly, it follows from the triangle inequality that
\begin{equation}\label{3.rev1}
\Bigl|\avE_{n\le N}\Bigl(\frac{\phi(W)}{W}\Lam^\sharp (Wn+b)-1\Bigr)\onetil (n)
F_{x}(Wn)\Bigr|=o_{w,\|f\|_{\rm Lip}}(1).
\end{equation}

\par For the remaining part of the dissection, we apply Theorem 1.1 of 
\cite{GT2010}. The sequence
$$h_{x,w}(n)=g_{x,w}((n-b)/W)$$
is  a polynomial nilsequence on the same group with the same Lip constant 
(with a polynomial sequence depending on $W$), and in addition is of the same 
degree. Moreover, one has
$$\avE_{n\le N}\Lam^\flat (Wn+b)g_{x,w}(n)\onetil(n)=\avE_{\substack{b<n\le NW+b\\
 n\equiv b\mmod{W}}}\Lam^\flat (n)h_{x,w}(n)\onetil((n-b)/W).$$
The average on the left hand side of (\ref{3.5}) may therefore be successfully 
estimated by showing that
\begin{equation}\label{3.fx1}
\log R\avE_{m\le \frac{1}{2}NW+b}\,\avE_{\substack{d\le (\frac{1}{2}NW+b)/m\\ 
md\equiv b\mmod{W}}}\mu(d)\chi^\flat \Bigl( \frac{\log d}{\log R}\Bigr)
h_{x,w}(md)=o_{w,\|f\|_{\rm Lip}}(1).
\end{equation}

\par Fortunately, Theorem 1.1 of \cite{GT2010} implies a bound of the shape
$$\avE_{\substack{d\le [M]\\ md\equiv b\mmod{W}}}\mu(d)h_{x,w}(md)\ll 
(1+\|f\|_{\rm Lip})(\log (M/W))^{-A},$$
valid for any positive number $A$ and $M\ge 2W$. We note, in particular, that 
this bound is independent of the polynomial sequence (it depends only on the 
degree), and that there is no restriction on the size of the coefficients of the
 latter polynomial. Since the weight $\chi^\flat(x)$ vanishes for $|x|\le 
\frac{1}{2}$, it follows that the average over $d$ in (\ref{3.fx1}) makes no 
contribution when $[(\frac{1}{2}NW+b)/m]<R^{1/2}$. Consequently, since the weight 
$\chi^\flat(x)$ is smooth, we find by partial summation that when $m\in 
[\frac{1}{2}NW+b]$, the inner average on the left hand side of (\ref{3.fx1}) 
is equal to
$$\avE_{\substack{R^{1/2}<d\le (\frac{1}{2}NW+b)/m\\ md\equiv b\mmod{W}}}\mu(d)
\chi^\flat \Bigl( \frac{\log d}{\log R}\Bigr)h_{x,w}(md)\ll_w 
(1+\|f\|_{\rm Lip})(\log R)^{-A}.$$
We therefore deduce that
$$\avE_{m\le \frac{1}{2}NW+b}\,\avE_{\substack{d\le (\frac{1}{2}NW+b)/m\\ 
md\equiv b\mmod{W}}}\mu(d)\chi^\flat \Bigl( \frac{\log d}{\log R}\Bigr)h_{x,w}(md)
\ll_w (1+\|f\|_{\rm Lip})(\log R)^{-A},$$
and, provided that we take $A>1$, this suffices to deliver the estimate claimed 
in (\ref{3.fx1}).\par

The conclusion of Proposition \ref{proposition3.14} is obtained by combining 
the conclusions of (\ref{3.rev1}) and (\ref{3.fx1}). From here, in view of 
Corollary \ref{corollary3.13}, the lower bound 
(\ref{3.6}) follows on noting that by Theorem \ref{theorem3.8}, one has 
$$\avE_{n\le N}\onetil(n)\int 1_A(x)\prod_{j=1}^k(T^{P_j(Wn)}1_A(x))\,d\mu 
\gg_\del 1+o(1).$$
The lower bound (\ref{3.5a}) now follows from Lemma \ref{lemma3.7}, and this 
completes the proof of Theorem \ref{theorem1.1}. 

\section{Convergence of multiple averages along the primes}
In this section we prove the $L^2$ convergence of polynomial multiple averages 
along the primes. Let $f_1,\dots ,f_k$ be bounded functions. Consider the 
averages
$$A_N(x)=\avE_{\substack{p<N\\ \text{$p$ prime}}}\prod_{j=1}^k T^{P_j(p)}f_j(x).$$
We seek to show that the sequence $\{ A_N(x)\}_{N=1}^\infty$ forms a Cauchy 
sequence in $L^2$. Observe first that the sequence $\{ A_N(x)\}_{N=1}^\infty$ is 
Cauchy if and only if the sequence $\{B_N(x)\}_{N=1}^\infty$ is Cauchy, where
$$B_N(x)=\avE_{n<N}\Lam(n)\prod_{j=1}^kT^{P_j(n)}f_j(x).$$
Indeed, independently of the value of $x$, one has
\begin{align*}
|B_N(x)-A_N(x)|&=\Bigl| \avE_{n<N}\Lam(n)\prod_{j=1}^kT^{P_j(n)}f_j(x)-
\avE_{\substack{p<N\\ \text{$p$ prime}}}\prod_{j=1}^kT^{P_j(p)}f_j(x)\Bigr| \\
&=o(1).
\end{align*}

\par It remains now only to show that the sequence $\{ B_N(x)\}_{N=1}^\infty$ is 
Cauchy, and this we achieve by applying a stronger version of Proposition 
\ref{proposition3.10} that we now briefly pause to establish. This may be 
regarded as a result on orthogonality to nilsystems in $L^2$.

\begin{proposition}\label{proposition4.1} Suppose that $\rmX$ is an ergodic 
measure preserving system. Let $f_1,\ldots ,f_k\in L^\infty(\rmX)$, and suppose 
that $\|f_j\|_\infty\le L$ $(1\le j\le k)$. Then there exists a factor $\rmY$ of 
$\rmX$, with $\pi:\rmX \rightarrow \rmY$ the factor map, and an integer 
$d(\bfP)$, with the following properties:
\begin{enumerate}
\item[(i)] the factor $\rmY$ has the structure of an inverse limit of 
$d(\bfP)$-step nilsystems, and
\item[(ii)] if for some index $i$ one has $\pi_*f_i=0$, then, uniformly in $b$, 
one has 
$$\Bigl\| \avE_{n\le N}\Lamtil_{w,b}(n)\prod_{j=1}^kT^{P_j(Wn+b)}f_j(x)\Bigr\|_2
=o_{L,w}(1)+o_{L,w\rightarrow \infty}(1).$$
\end{enumerate}
\end{proposition}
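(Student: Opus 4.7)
The plan is to reduce the $L^2$ bound claimed in (ii) to the scalar bound furnished by Proposition \ref{proposition3.10}, via the simple device of testing $F$ against itself. I would take the factor $\rmY$ and integer $d(\bfP)$ to be exactly those produced by Proposition \ref{proposition3.10}, so that (i) is immediate, and focus on (ii). Set
$$F(x) = \avE_{n\le N} \Lamtil_{w,b}(n) \prod_{j=1}^k T^{P_j(Wn+b)} f_j(x), \qquad G_n(x) = \prod_{j=1}^k T^{P_j(Wn+b)} f_j(x),$$
so that $F = \avE_n \Lamtil_{w,b}(n) G_n$. Then
$$\|F\|_2^2 = \int F\bar F\,d\mu = \avE_{n\le N} \Lamtil_{w,b}(n) \int G_n(x) \overline{F(x)}\,d\mu(x),$$
and since $\Lamtil_{w,b}\ge 0$, the triangle inequality over the outer $n$-average reduces (ii) to establishing
$$\Bigl|\int G_n\bar F\,d\mu\Bigr| = o_{L,w}(1) + o_{L,w\to\infty}(1) \qquad \text{uniformly in $n\in[N]$ and in $b$.}$$

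To verify this, I would expand $\bar F = \avE_m \Lamtil_{w,b}(m) \overline{G_m}$ to convert the left-hand side into
$$\avE_{m\le N} \Lamtil_{w,b}(m) \int G_n(x) \prod_{j=1}^k T^{P_j(Wm+b)} \bar f_j(x)\,d\mu(x).$$
For each fixed $n\in[N]$, the function $G_n$ is a bounded function on $X$ with $\|G_n\|_\infty \le L^k$, and the displayed expression is precisely of the form addressed by the PET-inductive estimate (\ref{3.g}) at the heart of the proof of Proposition \ref{proposition3.10}, with test function $g_0 = G_n$, auxiliary functions $\bar f_1,\ldots,\bar f_k$, and polynomial system $\{P_j(W\cdot + b)\}_{j=1}^k$. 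The shift by $b$ in each polynomial argument does not alter the PET weight of the system, so the proof of Proposition \ref{proposition3.10} goes through unchanged; and because $\pi_* f_i = 0$ forces $\pi_*\bar f_i = 0$, that proof yields a bound $o_{L^k,w}(1) + o_{L^k,w\to\infty}(1)$. Crucially, the constants depend on the functions involved only through their $L^\infty$-norms, so the bound is uniform in $n$ and in $b$.

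Combining these estimates with $\avE_n \Lamtil_{w,b}(n) = 1 + o_{w\to\infty}(1)$ will give $\|F\|_2^2 = o_{L,w}(1) + o_{L,w\to\infty}(1)$, and taking square roots will deliver (ii). The main obstacle will be the (routine but necessary) verification that the proof of Proposition \ref{proposition3.10}, and particularly its PET-inductive core (\ref{3.g}), is insensitive to the presence of a bounded test function $g_0$ and to the shift by $b$ in the polynomial arguments; both extensions are transparent from the explicit form of that proof, but do require checking that the constants in the $V_\calP$-bounds and in the invocation of the structure theorem (Theorem \ref{theorem3.9}) remain independent of $g_0$ and of $b$.
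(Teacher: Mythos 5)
Your argument is essentially identical to the paper's proof: the paper likewise expands $\|F\|_2^2$ as a double average over $n$ and $m$, applies the triangle inequality in the outer variable $n$, and invokes Proposition \ref{proposition3.10} (whose PET core (\ref{3.g}) already carries the extra bounded factor $g_0$, here $g_n$ with $\|g_n\|_\infty\le L^k$) uniformly in $n$ and $b$ for the inner average. The only slip is your claim that $\avE_{n\le N}\Lamtil_{w,b}(n)=1+o_{w\rightarrow\infty}(1)$ --- this average is in fact of size roughly $\eta/2$ --- but all that is needed is the upper bound $\avE_{n\le N}\Lamtil_{w,b}(n)\le \avE_{n\le N}\nu_{w,b}(n)=1+o_{w\rightarrow\infty}(1)$, so the conclusion stands.
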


\begin{proof} In order to establish the proposition, it suffices to confirm that
 the expression
$$M=\Bigl\| \avE_{n\le N}\Lamtil_{w,b}(n)\prod_{j=1}^kT^{P_j(Wn+b)}f_j(x)
\Bigr\|_2^2$$
satisfies the asymptotic relation
$$M=o_{L,w}(1)+o_{L,w\rightarrow \infty}(1).$$
But if we write
$$g_n(x)=\prod_{j=1}^kT^{P_j(Wn+b)}f_j(x),$$
then an application of the triangle inequality to the expansion of $M$ yields 
\begin{align}
M&=\avE_{n,m\le N}\Lamtil_{w,b}(n)\Lamtil_{w,b}(m)\int \prod_{j=1}^kT^{P_j(Wn+b)}f_j(x)
\prod_{j=1}^kT^{P_j(Wm+b)}\fbar_j(x)\,d\mu \notag\\
&\le \avE_{n\le N}\Lamtil_{w,b}(n)\Bigl| \avE_{m\le N}\int \Lamtil_{w,b}(m)g_n(x)
\prod_{j=1}^kT^{P_j(Wm+b)}\fbar_j(x)\,d\mu \Bigr| .\label{4.1}
\end{align}
However, the hypotheses of the proposition imply that $\| g_n(x)\|_\infty \le 
L^k$, and so it follows from Proposition \ref{proposition3.10} that, uniformly in $b$, 
one has
$$\Bigl| \avE_{m\le N}\int \Lamtil_{w,b}(m)g_n(x)\prod_{j=1}^kT^{P_j(Wm+b)}\fbar_j(x)
\,d\mu \Bigr| =o_{L,w}(1)+o_{L,w\rightarrow \infty}(1).$$
The desired conclusion now follows on substituting this estimate into 
(\ref{4.1}).
\end{proof} 

We now return to the proof of Theorem \ref{theorem1.2}. Suppose that $f_1,\dots 
,f_k\in L^\infty(X)$, and that $\|f_j\|\le L$ $(1\le j\le k)$. Let $M$ be a large 
natural number, and put $N=2M$. Observe that since the von Mangoldt function 
$\Lambda$ is supported on prime powers, one has
\begin{align*}
B_{WM}(x)&=\avE_{n<WM}\Lam(n)\prod_{j=1}^kT^{P_j(n)}f_j(x)\\
&=\frac{1}{\phi(W)}\sum_{\substack{0\le b< W\\ (b,W)=1}}\avE_{n<M}\frac{\phi(W)}{W}
\Lam(Wn+b)\prod_{j=1}^kT^{P_j(Wn+b)}f_j(x)+o_L(1)\\
&=\frac{1}{\phi(W)}\sum_{\substack{0\le b<W\\ (b,W)=1}}\avE_{n<N}\eta^{-1}
\Lamtil_{w,b}(n)\prod_{j=1}^kT^{P_j(Wn+b)}f_j(x)+o_{L,w}(1).
\end{align*}
By applying the triangle inequality in combination with Proposition 
\ref{proposition4.1}, we obtain
\begin{align*}
\Bigl\| \avE_{n<N}\Lamtil_{w,b}(n)\prod_{j=1}^kT^{P_j(Wn+b)}f_j(x)-
\avE_{n<N}\Lamtil_{w,b}(n)\prod_{j=1}^k&T^{P_j(Wn+b)}\pi^*\pi_*f_j(x)\Bigr\|_2\\
&=o_{L,w}(1)+o_{L,w\rightarrow \infty}(1),
\end{align*}
in which $\pi$ is the projection onto the relevant nilpotent factor supplied by 
Proposition \ref{proposition3.10}. By Proposition \ref{proposition3.14}, 
meanwhile, one has
$$\Bigl\| \avE_{n<N}(\eta^{-1}\Lamtil_{w,b}(n)-\onetil (n))\prod_{j=1}^k
T^{P_j(Wn+b)}\pi^*\pi_*f_j(x)\Bigr\|_2=o_{L,w}(1).$$

\par Consider next the average $C_{b,M}(x)$ defined by
$$C_{b,M}(x)=\avE_{n<M}\prod_{j=1}^kT^{P_j(Wn+b)}f_j(x).$$
It follows from Leibman \cite{Lei2005a} that the sequence 
$\{C_{b,M}(x)\}_{M=1}^\infty$ converges, and is thus a Cauchy sequence. Fix a positive 
number $\eps$. Then whenever $M_1$ and $M_2$ are sufficiently large, one has
$$\|C_{b,M_1}(x)-C_{b,M_2}(x)\|_2\le \eps.$$
Under the same conditions, moreover, it follows from the triangle inequality in 
combination with the conclusions of the previous paragraph that
$$\Bigl\| B_{WM_i}(x)-\frac{1}{\phi(W)}\sum_{\substack{0\le b<W\\ (b,W)=1}}C_{b,M_i}(x)\Bigr
\|_2<\eps \quad (i=1,2).$$
Consequently, again by the triangle inequality, one finds that whenever $M_1$ and 
$M_2$ are sufficiently large, one has $\|B_{WM_1}(x)-B_{WM_2}(x)\|_2\le 3\eps$, and 
thus the sequence $\{ B_{WM}(x)\}_{M=1}^\infty$ is Cauchy. Finally, since for 
$1\le i\le W$, one has
$$B_{WM+i}(x)=B_{WM}(x)+o_L(1),$$
the sequence $\{ B_M(x)\}_{M=1}^\infty$ is also Cauchy. This confirms our earlier 
claim, and thus the proof of Theorem \ref{theorem1.2} is complete.

\appendix
\section{Ergodic theoretic preliminaries}
We take the opportunity here to prepare some of the infrastructure central to 
the ergodic theory employed in the main body of this paper.

\subsection{Measure preserving systems}
We begin by recalling that a {\it measure preserving transformation} on a 
measure space $(X_0,\calB_X,\mu_X)$ is a map $T:X_0\rightarrow X_0$ satisfying 
the property that, for all $B\in \calB$, one has $\mu(T^{-1}B)=\mu(B)$. A {\it 
probability measure preserving system (m.p.s.)} $X$ is a quadruplet 
$(X_0,\calB_X,\mu_X,T)$, where the triple $(X_0,\calB_X,\mu_X)$ is a probability 
measure space, and $T:X_0\rightarrow X_0$ a measure preserving transformation. 
We define the $L^p$ spaces $L^p(X)=L^p(X_0,\calB_X,\mu_X)$ for $1\le p\le \infty$ 
in the usual manner. Thus, in particular, we identify any two functions in 
$L^p(X)$ which agree $\mu$-almost everywhere. If $X_0$ is a point, we write 
$X=\pt$. We will assume throughout this paper that the measure preserving system
 $X$ is {\em regular}, which is to say that $X_0$ is a compact metric space and 
$\calB_X$ consists of all Borel sets in $X$. There is no loss of generality in 
this assumption since any m.p.s $X$ such that  $\calB_X$ is generated by a 
countable set is equivalent to a regular one.\par

A {\it factor map} $\pi_Y^X:X\rightarrow Y$ is a morphism in the category of 
measure preserving systems. A {\it factor} $(Y_0,\calB_Y,\mu_Y,S,\pi_Y^X)$ of a 
system $X=(X_0,\calB_X,\mu_X,T)$ is a measure preserving system 
$Y=(Y_0,\calB_Y,\mu_Y,S)$ together with a factor map $\pi_Y^X:X\rightarrow Y$. In
 these circumstances, the pushforward $(\pi_Y^X)_*\mu_X$ is equal to $\mu_Y$, and
 the relation $\pi_Y^X\circ T=S\circ \pi_Y^X$ holds $\mu_X$-almost everywhere. 
When $f:Y\rightarrow \dbC$ is a measurable map, we write $(\pi_Y^X)^*f:X
\rightarrow \dbC$ for the pullback defined by $(\pi_Y^X)^*f=f\circ \pi_Y^X$. 
Conversely, when $f\in L^2(X)$, we denote by $(\pi_Y^X)_*f\in L^2(Y)$ the 
pushforward of $f$. We then define the conditional expectation of $f$ to $Y$ by
$$\dbE(f|Y)=(\pi_Y^X)^*(\pi_Y^X)_*f\in L^2(X).$$
We say that $f\in L^2(X)$ is {\it $Y$-measurable} when $f=\dbE(f|Y)$, or 
equivalently, when $f=(\pi_Y^X)F$ for some $F\in L^2(Y)$. In circumstances 
wherein $Y_0$ is a point, we say that $Y$ is {\it trivial} and denote $Y$ as 
$\pt$. Thus, for instance, we may write $(\pi_\pt^X)_* f=\int_Xf\, d\mu_X$. When 
there is no ambiguity we write $\pi$ for $\pi_Y^X$. It is convenient when 
confusion is readily avoided to abuse notation by writing $X$ for the system 
$(X_0,\calB_X,\mu_X,T)$, or for the measure space $(X_0,\calB_X,\mu_X)$, or 
simply for the phase space $X_0$.

\subsection{Nilsystems and nilsequences}
A {\it $k$-step nilsystem} $X$ is a measure preserving system $(X_0,\calB_X,
\mu_X,T)$, in which $X_0=G/\Gam$, for some $k$-step nilpotent Lie group $G$ and 
a cocompact lattice $\Gam$, and $\calB_X$ is the Borel $\sig$-algebra, $\mu_X$ 
the Haar measure, and the measure preserving transformation $T:G/\Gam 
\rightarrow G/\Gam$ is given by a rotation by some group element $a\in G$, 
which is to say that $T(g\Gam)=ag\Gam$. A {\it $k$-step (linear) nilsequence} is a 
sequence of the form $\{F(a^nx)\}_{n\in \dbN}$, where $x\in G/\Gam$ and $F:G/\Gam 
\rightarrow \dbR$ is a continuous function. We endow the nilmanifold $G/\Gam$ 
with a smooth Riemannian metric $d$. Let $g:\dbN \to G$. For $h \in \dbN$ we denote 
$\partial_h g(n) = g(n+h)g^{-1}(n)$. A function $g:\dbN \to G$ is called a 
polynomial sequence of degree $<k$ if, for any $h_1,\ldots,h_k\in \dbN$, one has 
$\partial_{h_k}\ldots \partial_{h_1}g(n)\equiv 1_G$. A {\it degree $<k$ 
polynomial nilsequence} is a sequence of the form $\{F(g(n)x)\}_{n\in \dbN}$, 
where $x\in G/\Gam$ and $F:G/\Gam \rightarrow \dbR$ is a continuous function, 
and $g$ is a polynomial sequence of degree $<k$. We say that a nilsequence 
$\{F(g(n)x)\}$ has Lipschitz constant $L$ if the function $F$ has Lipschitz 
constant $L$. In circumstances in which the representation of the nilsequence is 
not explicit, we define the Lipschitz constant by taking the infimum over all 
possible representations.\par

We next define the Gowers norms, introduced in Lemma 3.9 of \cite{G2001}. Let $a$ 
be a function from $\dbZ/N\dbZ$ into $\dbC$. When $k$ is a non-negative integer, 
we define the {\it  $U^k[N]$-norm} of $a$ to be the quantity $\|a\|_{U^k[N]}$ 
defined via the relation
$$\|a\|_{U^k[N]}^{2^k}=\avE_{n,m_1,\ldots ,m_k\in \dbZ/N\dbZ}\,
\prod_{\bfome\in\{0,1\}^k}a^\bfome(n+\bfome\cdot \bfm),$$
where $a^\bfome=a$ when $\sum_{i=1}^k\ome_i\equiv 0\pmod{2}$, and otherwise 
$a^\bfome=\abar$. Next, we define the {\it dual norm to the Gowers norm} by means 
of the relation
$$\|F\|_{U^k[N]*}=\sup \Bigl\{ \bigl|\avE_{n\in [N]}f(n)F(n)\bigr|:\|f\|_{U^k[N]}
\le 1\Bigr\}.$$

\par Finally, an {\it averaged $k$-step nilsequence} with Lipschitz constant $M$
 is a function $F(n)$ having the form
$$F(n)=\avE_{i\in I}F_i(a_i^nx_i),$$
where $I$ is a finite index set, and for each $i\in I$, the expression 
$F_i(a_i^nx_i)$ is a bounded $k$-step nilsequence on $G/\Gam$ with Lipschitz 
constant not exceeding $M$.

\section{PET induction}
The notion of PET induction was introduced by Bergelson in \cite{Ber1987} as a 
mechanism for establishing a {\it Polynomial Ergodic Theorem} (or PET) for a 
weakly mixing system. We introduce the framework required to apply PET induction
 in this appendix so as to assist in our exposition elsewhere in this paper.\par

A {\it polynomial system} is a set of polynomials $\calP=\{P_1(n),\ldots ,P_k(n)
\}$, where $P_i(n)\in \dbZ[n]$ $(1\le i\le k)$. The {\it degree} of $\calP$ is 
the maximum of the degrees of the polynomials lying in $\calP$. We define an 
equivalence relation on $\dbZ[n]$ by defining the polynomials $P$ and $Q$ to be 
equivalent when $\deg (P-Q)<\deg P$. We then define the {\it degree} of an 
equivalence class to be the degree of its elements. Any polynomial system 
$\calP$ can be partitioned into equivalence classes. For each positive integer 
$l$, let $w_l$ be the number of classes of degree $l$ in $\calP$. Then the 
{\it weight} $w(\calP)$ of the system $\calP$ is defined to be the vector 
$(w_1,\ldots ,w_{\deg \calP})$. Next we establish an order relation on weight 
vectors. Given two integer vectors $\bfv=(v_1,\ldots ,v_r)$ and $\bfw=
(w_1,\ldots ,w_s)$, we write $\bfv<\bfw$ if either $r<s$, or else $r=s$ and 
there is an index $n$ for which $v_j=w_j$ $(n<j\le r)$ and $v_n<w_n$. Subject to 
this relation, the set of weights of polynomial systems is well-ordered. The PET
 induction is an induction on this well-ordered set.\par

An ordered system $\calP=\{P_1\ldots,P_k\}$ is {\it standard} if $\deg P_j>0$ 
for $1\le i\le k$, $\deg (P_i-P_j)>0$ for $i\ne j$, and in addition $P_1$ has 
minimal degree in $\calP$. The system is {\it linear} if each polynomial in 
$\calP$ is linear. The following lemma shows that standard systems are 
well-behaved with respect to a natural differencing operation.

\begin{lemma}\label{lemmaB.1} Let $\calP=\{P_1,\ldots ,P_k\}$ be an ordered 
polynomial system satisfying the property that $P_1$ has minimal degree in 
$\calP$. Given a positive integer $m$, let $\calQ'_m$ be the system defined by
$$\calQ'_m=\{ P_j(n+m)-P_1(n): 1\le j\le k\},$$
and let $\calQ_m^*$ be the set of polynomials lying in $\calQ'_m\cup \calQ'_0$ 
having degree zero in terms of $n$. Finally, denote by $\calQ=\calQ_m(\calP)$ 
the system obtained  from the set $(\calQ'_m\cup \calQ'_0)\setminus \calQ_m^*$ by
 reordering, if possible, so as to respect the conditions described in the 
preamble. Then
\begin{enumerate}
\item[(i)] when $\calP$ is standard and non-linear, the system $\calQ$ is 
standard of weight strictly smaller than the weight of $\calP$, and
\item[(ii)] when $\calP$ is linear, the system $\calQ$ is of weight strictly 
smaller than the weight of $\calP$, though possibly non-standard.
\end{enumerate}
\end{lemma}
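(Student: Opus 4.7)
The plan is to track how the equivalence classes of $\calP$ evolve under the differencing that produces $\calQ$. Writing $d_1 = \deg P_1$, the first key observation is an absorption: for each fixed $j$, the two polynomials $P_j(n+m) - P_1(n)$ and $P_j(n) - P_1(n)$ in $\calQ'_m \cup \calQ'_0$ differ by $P_j(n+m) - P_j(n)$, which has degree strictly less than $\deg P_j$ in $n$. Hence, if both survive in $\calQ$, they lie in the same equivalence class of $\calQ$, so each index $j$ contributes at most one class to $\calQ$.

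Next I would stratify classes by degree relative to $d_1$. For $P_j$ in a class of $\calP$ of degree $d > d_1$, the leading term in $n$ is preserved after subtracting $P_1(n)$, so the class survives in $\calQ$ at the same degree $d$, and distinct classes of $\calP$ at such $d$ yield distinct classes in $\calQ$. For classes of $\calP$ at degree exactly $d_1$ different from $P_1$'s class, the leading terms again do not cancel with that of $P_1$, so these classes pass to $\calQ$ at degree $d_1$, one for one. The one special case is $P_1$'s own class: $P_1(n) - P_1(n) = 0$ is deleted, $P_1(n+m) - P_1(n)$ has degree $d_1 - 1$, and every other member of $P_1$'s class yields a polynomial of $\calQ$ of degree strictly less than $d_1$ by leading-term cancellation. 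Consequently $w_d(\calQ) = w_d(\calP)$ for every $d > d_1$, while $w_{d_1}(\calQ) = w_{d_1}(\calP) - 1$. By the well-ordering on weight vectors described in the preamble, this gives $w(\calQ) < w(\calP)$ in both cases (i) and (ii).

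For standardness in part (i), the positive-degree requirement on elements of $\calQ$ is imposed by construction, and the minimal-degree requirement on its first element is handled by the permitted reordering; the substantive condition is $\deg(Q - Q') > 0$ for distinct $Q, Q' \in \calQ$. Any such difference has the form $P_j(n+s) - P_{j'}(n+s')$ with $s, s' \in \{0, m\}$. When $j \ne j'$, positivity follows from the standardness of $\calP$ via leading-term analysis together with $\deg(P_j - P_{j'}) > 0$. When $j = j'$ and $s \ne s'$, the difference equals $\pm(P_j(n+m) - P_j(n))$, of degree $\deg P_j - 1$, and positivity of this is exactly what the non-linear hypothesis provides. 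In part (ii), this last case yields a constant for linear $P_j$, explaining why $\calQ$ can fail to be standard even though the weight still strictly decreases. The main obstacle is precisely this final case of the standardness analysis: pinning down how non-linearity is used in (i) and documenting that its failure in (ii) is the reason the weaker statement of part (ii) is unavoidable; the remaining steps reduce to a careful but routine classification of how equivalence classes transform under differencing.
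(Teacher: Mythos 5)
The paper offers no proof of Lemma \ref{lemmaB.1} at all --- it is quoted as part of the classical PET--induction framework going back to Bergelson --- so your write-up cannot be checked against an argument in the text and must stand on its own. The weight computation does stand: your analysis giving $w_d(\calQ)=w_d(\calP)$ for $d>d_1$ and $w_{d_1}(\calQ)=w_{d_1}(\calP)-1$ (with $P_1$'s own class collapsing to lower degree) is correct, and the passage to $w(\calQ)<w(\calP)$ under the top-down lexicographic order is sound whether or not the vector shortens. This is also the only part of the lemma that the induction in Proposition \ref{proposition3.10} uses quantitatively.

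The standardness claim in part (i) is where there is a genuine gap, and it sits exactly at the step you defer as a ``routine classification''. First, by the paper's definition a system is linear only if \emph{every} polynomial in it is linear, so a standard non-linear system may contain several linear polynomials (all of degree $1=\deg P_1$); for such a $P_j\ne P_1$ both $P_j(n+m)-P_1(n)$ and $P_j(n)-P_1(n)$ survive in $\calQ$ and differ by the nonzero constant $a_jm$, so $\calQ$ fails $\deg(Q-Q')>0$. Concretely, $\calP=\{n,2n,n^2\}$ is standard and non-linear, yet $\calQ$ contains both $n$ and $n+2m$. Second, your cross-term case $j\ne j'$ is not settled by ``leading-term analysis'': when $\deg(P_j-P_{j'})=\deg P_j-1$ the leading terms of $(P_j-P_{j'})(n)$ and $P_j(n+m)-P_j(n)$ can cancel for particular $m$. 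For $\calP=\{n^2,n^2+2n\}$ and $m=1$ one gets $P_1(n+1)-P_1(n)=2n+1$ and $P_2(n)-P_1(n)=2n$ in $\calQ$, again differing by a constant, even though every polynomial is quadratic. So part (i) as literally stated is false without qualification, and a complete proof must make one of the standard repairs explicit: restrict to generic $m$ (all but $O_\calP(1)$ values), or permit merging of elements of $\calQ$ that differ by constants --- they lie in a single equivalence class, so your weight count is unaffected, and in the van der Corput step $T^{Q}g\cdot T^{Q+c}h$ collapses to one term. Until one of these is built in, the inductive appeal to ``standard systems of smaller weight'' in the main argument is not justified by your proof.
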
 

Note that when $\calP$ is a system of weight $(1)$, then the system 
$\calQ$, associated to $\calP$ by the lemma, is empty. Given a standard 
polynomial system $\calP$, the number of steps of the type described in the 
lemma required to reach the empty system is called the {\it parallelepiped 
degree} of $\calP$, denoted by $l(\calP)$. 

\begin{example} Consider the situation in which $\calP=\{ n^2,n^2+n\}$. Then 
$\calP$ is standard of weight $(0,1)$. The system $\calQ_m(\calP)$ associated to
 $\calP$ by Lemma \ref{lemmaB.1} is the system $\calP_1=
\{ n,2mn+m^2,2mn+n+m^2+m\}$, which is standard of weight $(3)$. A second 
application of the lemma associates the system $\calP_2=\calQ_k(\calP_1)$ to 
$\calP_1$, and this is the system 
$$\{ 2mn-n+m^2,2mn+m^2+m,2mn-n+2km+m^2,2mn+2mk+m^2+m+k\},$$
which is non-standard of weight $(2)$. Another application yields the system
$\calP_3=\calQ_l(\calP_2)$, namely
$$\{ n+m,n+2mk+m+k,n+2ml+m,n+2mk+2ml+m+k\},$$
which is of weight $(1)$. Finally, one last application gives the empty set. 
Thus we may conclude that $l(\calP)=4$.
\end{example}

\bibliographystyle{amsbracket}
\providecommand{\bysame}{\leavevmode\hbox to3em{\hrulefill}\thinspace}

\end{document}